

\documentclass[a4paper,10pt,reqno]{amsart}
\usepackage{amsmath, enumerate}
\usepackage{amsthm}
\usepackage{graphicx}
\usepackage{amssymb}
\usepackage[utf8x]{inputenc}
\usepackage{fancyhdr}
\usepackage{calc}
\usepackage{url}
\usepackage[margin=2cm]{geometry}
\usepackage[english]{babel}
\selectlanguage{english}
\usepackage[T1]{fontenc}
\usepackage[usenames,dvipsnames]{color}
\usepackage{esint}

\newcommand{\R}{\mathbb{R}}
\newcommand{\Sph}{\mathbb{S}^{d-1}_+}
\newcommand{\norm}[1]{\left\lVert#1\right\rVert}
\def\eps{\varepsilon}
\def\super{\overline}

\newtheorem{theorem}{Theorem}
\newtheorem{prop}{Proposition}
\newtheorem{lemma}{Lemma}
\newtheorem{remark}{Remark}
\newtheorem{cor}{Corollary}
\newtheorem{definition}{Definition}

\begin{document}

\title{Liouville-type theorems for the Lane-Emden equation in the half-space and cones}
\author{Louis Dupaigne, Alberto Farina and Troy Petitt}

\address{Louis Dupaigne: Institut Camille Jordan, UMR CNRS 5208, Universit\'{e} Claude Bernard Lyon 1, 43 boulevard du 11 novembre 1918, 69622 Villeurbanne cedex (France)}
\email{dupaigne@math.univ-lyon1.fr}
\address{Alberto Farina: LAMFA, UMR CNRS 7352, Universit\'{e} Picardie Jules Verne, 33 rue St. Leu, 80039 Amiens (France)}
\email{alberto.farina@u-picardie.fr}
\address{Troy Petitt: Dipartimento di Matematica, Politecnico di Milano, Piazza Leonardo da Vinci 32, 20133 Milano (Italy)}
\email{troy.petitt@polimi.it}

\maketitle

\section{Introduction}
Let $d\geq1$ and $p>1$. We consider 
the Lane-Emden equation 
\begin{equation}\label{LEQ}
\left\{
    \begin{aligned}
		-\Delta u &\,=\, \vert u\vert^{p-1}u & \textrm{in}\;\; \R^d_+\,, \\
		u &\,=\, 0 & \textrm{on}\;\; \partial\R^d_+\,,
	\end{aligned}
\right.
\end{equation}
posed in the upper half-space $\R^d_+:=\{x=(x',x_d)\colon\,x'\in\R^{d-1},x_d\in(0,\infty)\}$ with homogeneous Dirichlet boundary conditions. It is conjectured that the above equation has only one nonnegative solution, $u=0$.  

Gidas and Spruck in \cite{GS} showed that this is indeed the case if $1<p\le p_S(d)$, where
$$
p_S(d)=\frac{d+2}{(d-2)_+}
$$
is Sobolev's critical exponent. In the case $p> p_S(d)$, only partial results are available: Dancer \cite{D} considered bounded nonnegative solutions and proved in this case that $u=0$ if $p<p_S(d-1)$. The second named author \cite{Farina} improved Dancer's result showing that the only bounded nonnegative solution is $u=0$ if $p<p_{JL}(d-1)$, where $p_{JL}$ is the Joseph-Lundgren stability exponent given by 
\begin{equation}\label{JL exponent}
p_{JL}(d):=\frac{(d-2)^2-4d+8\sqrt{d-1}}{(d-2)(d-10)_+}.
\end{equation}
Finally Chen, Lin, and Zou \cite{CLZ} proved that no bounded nonnegative solution $u\neq0$ of \eqref{LEQ} exists for any $p>1$. In all these results, the fact that any nonnegative bounded solution of \eqref{LEQ} is monotone in the $x_d$-direction, i.e. $\partial u/\partial{x_d}>0$ in $\R^d_+$, is crucially used. In fact, Sirakov, Souplet and the first named author proved in \cite{DSS} that, more generally, no nontrivial monotone solution of \eqref{LEQ} exists, whether bounded or not.
Note that monotone solutions are stable, meaning they verify additionally that
\begin{equation}\label{def stable}
    p\int\,\vert u\vert^{p-1}\varphi^2\;dx\leq\int|\nabla\varphi|^2\;dx,
\end{equation}
for all $\varphi\in C^1_c(\R^d_+)$. Even more generally, consider solutions which are stable only outside a compact set $K\subset\R^d_+$, i.e. such that \eqref{def stable} holds for $\varphi\in C^1_c(\R^d_+\setminus K)$. The second named author proved in \cite[Theorem 9(b)]{Farina} that there is no such solution except $u=0$, provided $1<p<p_{JL}(d)$. We improve this result as follows.
\begin{theorem}\label{To prove:SOAC}
Let $p>1$ and let $u\in C^2(\R^d_+)\cap C(\overline{\R^d_+})$ be a solution of \eqref{LEQ} stable outside a compact set. Then $u=0$.
\end{theorem}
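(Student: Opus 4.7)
The plan is to extend Farina's Liouville-type argument \cite{Farina} (which covers $1<p<p_{JL}(d)$) to the full range $p>1$ by combining a stability-based integral estimate with the Chen-Lin-Zou classification \cite{CLZ} of bounded nonnegative solutions of \eqref{LEQ}.

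First, I would reproduce the Farina-type inequality. For an admissible exponent $\gamma>1/2$ and a cut-off $\eta\in C^1_c(\R^d_+\setminus K)$, substitute $\varphi=|u|^{\gamma-1/2}u\,\eta$ into \eqref{def stable}, test the equation against $|u|^{2\gamma-1}u\,\eta^2$, and combine the two. Since $u=0$ on $\partial\R^d_+$, all integrations by parts are free of boundary contributions, producing
\[
\int |u|^{p+2\gamma}\eta^2\,dx \;\leq\; C(\gamma,p)\int |u|^{2\gamma+1}|\nabla\eta|^2\,dx.
\]
Choosing $\eta$ as a standard annular cut-off between radii $R$ and $2R$ and applying H\"older's inequality yields the scaling bound
\[
\int_{(B_{2R}\setminus B_R)\cap\R^d_+}|u|^{p+2\gamma}\,dx \;\leq\; C\,R^{d-\tfrac{2(p+2\gamma)}{p-1}}.
\]
When the exponent on $R$ is negative, letting $R\to\infty$ forces $u\equiv 0$, which recovers Farina's range $1<p<p_{JL}(d)$.

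To cover $p\geq p_{JL}(d)$, I would reduce to the bounded case handled by Chen-Lin-Zou. Starting from the integral bounds at the maximal admissible $\gamma$, interior elliptic regularity and Moser iteration should upgrade these to a uniform pointwise bound on $u$ outside a large ball, and hence everywhere. Once $u$ is bounded, Dancer's sliding/moving-plane method \cite{D} applied in the $x_d$-direction produces monotonicity $\partial_{x_d}u>0$ and hence $u\geq 0$, so that \cite{CLZ} (or equivalently \cite{DSS} once monotonicity is known) closes the argument. A complementary route is a blow-down analysis: rescale $u_\lambda(x)=\lambda^{2/(p-1)}u(\lambda x)$, extract a subsequential limit $u_\infty$ that is stable on all of $\R^d_+\setminus\{0\}$, and rule it out via a Liouville theorem for globally stable solutions in the half-space.

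The main obstacle is reaching the supercritical range $p\geq p_{JL}(d)$, where Farina's estimate is no longer directly conclusive. This likely requires sharpening the stability inequality by exploiting the Hardy-type inequality $\int(u/x_d)^2\eta^2\leq C\int|\nabla(u\eta)|^2$ associated with the Dirichlet boundary, effectively performing a dimensional reduction by one and pushing the admissible exponent up to $p_{JL}(d-1)$ or beyond. A secondary difficulty is the sign-changing nature of $u$: both the moving-plane method and \cite{CLZ} are cleanest for nonnegative solutions, so preliminary sign information at infinity must itself be extracted from the integral decay estimates before the reduction to the bounded nonnegative case can be completed.
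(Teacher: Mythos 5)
Your first step (the Farina-type estimate) is sound and is essentially the paper's Lemma \ref{l2bis} and Remark \ref{optimal energy}; it disposes of $1<p<p_{JL}(d)$. The gap is in the reduction of the remaining range $p\ge p_{JL}(d)$ to the bounded nonnegative case. The claim that ``interior elliptic regularity and Moser iteration should upgrade these to a uniform pointwise bound'' does not work: after rescaling, the estimate $\int_{(B_{2R}\setminus B_R)\cap\R^d_+}|u|^{p+2\gamma}\le CR^{d-2(p+2\gamma)/(p-1)}$ says the blow-downs $u_R(x)=R^{2/(p-1)}u(Rx)$ are bounded in $L^{p+2\gamma}$ of a fixed annulus, and Moser iteration converts this into an $L^\infty$ bound only when $p+2\gamma>\tfrac d2(p-1)$ --- which is exactly the condition $d-2(p+2\gamma)/(p-1)<0$ under which you have already concluded $u\equiv0$. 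For $p\ge p_{JL}(d)$ no admissible $\gamma$ reaches this threshold, and indeed the singular homogeneous solutions $u=r^{-2/(p-1)}v(\theta)$ of \cite{BVPV} satisfy all of these integral estimates while being unbounded, so no such upgrade can exist. Independently, the passage to nonnegativity is unresolved: a solution that is merely stable outside a compact set can change sign, and neither Dancer's moving planes nor \cite{CLZ} applies without a sign; the integral decay estimates carry no sign information. The Hardy-inequality ``dimensional reduction'' would at best push the threshold to $p_{JL}(d-1)$, which is still finite for $d\ge12$.

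The ``complementary route'' you mention in one sentence --- blow-down and a Liouville theorem for the limit --- is in fact the paper's actual strategy, but all of its substance is missing from your sketch, and a bare ``Liouville theorem for globally stable solutions in the half-space'' would hit the same $p_{JL}$ barrier. What makes the blow-down work is: (i) a Pacard-type monotonicity formula for the rescaled energy $E(u;\lambda)$, which forces the blow-down limit $u_\infty$ to be \emph{homogeneous} of degree $-2/(p-1)$, hence of the form $r^{-2/(p-1)}v(\theta)$ with $v\in H^1_0(\mathbb S^{d-2}_+ )$ (here the cap is $\mathbb S^{d-1}\cap\R^d_+$); (ii) a classification of such $v$ using \emph{two} complementary mechanisms, the spherical stability inequality \eqref{left of stable} when $p\mu-\frac{(d-2)^2}4+(p-1)\lambda_1\ge0$, and a Poho\v zaev identity on the half-sphere when $p>p_S(d-1)$, which together cover every $p>p_S(d)$; and (iii) a final argument showing $E(u;\cdot)$ is nonnegative, nondecreasing and tends to $E(u_\infty;1)=0$, hence vanishes identically, forcing $u$ itself to be homogeneous and therefore zero by continuity at the origin. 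Without (i)--(iii) the proposal does not close for large $p$.
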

The above theorem can be partly extended to the following class of weak solutions.
\begin{definition}\label{weaksolution} Let $H$ denote the space of measurable functions $u$ defined on the upper half-space $\R^d_+$ such that $u\in H^1(\R^d_+\cap B_R)\cap L^{p+1}({\R^d_+\cap B_R})$ for every $R>0$ and $u=0$ on $\partial\R^d_+$ in the sense of traces. Then, $u$ is a weak solution of \eqref{LEQ} if $u\in H$ and it satisfies the equation $-\Delta u = \vert u\vert^{p-1}u$ in the sense of distributions. 
\end{definition}
Then, 
\begin{cor}\label{cor:SOAC}
Let $p>1$ and let $u$ be a nonnegative weak stable solution of \eqref{LEQ}. Then $u=0$.
\end{cor}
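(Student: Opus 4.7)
The plan is to deduce Corollary \ref{cor:SOAC} from Theorem \ref{To prove:SOAC}; the only missing ingredient is classical regularity of $u$. Since Theorem \ref{To prove:SOAC} concerns solutions of class $C^2(\R^d_+)\cap C(\overline{\R^d_+})$, it suffices to prove that every nonnegative weak stable solution of \eqref{LEQ} actually belongs to this class. Once we have this, stability implies stability outside any compact set and Theorem \ref{To prove:SOAC} immediately yields $u\equiv 0$.

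To upgrade the regularity I would run a Farina-style Moser bootstrap driven by the stability inequality. Fix $\eta\in C_c^\infty(\overline{\R^d_+})$ and a parameter $s>1$. Since $u\ge 0$ with $u=0$ on $\partial\R^d_+$ in the trace sense, the function $\varphi=\eta u^s$ also vanishes on the boundary; using $u\in H^1_{\mathrm{loc}}\cap L^{p+1}_{\mathrm{loc}}$ and a standard truncation (e.g.\ replace $u$ by $\min(u,k)$ and send $k\to\infty$), $\varphi$ is admissible in the extension by density of \eqref{def stable}. Plugging $\varphi$ into \eqref{def stable} and using $\eta^2 u^{2s-1}$ in the weak formulation of \eqref{LEQ}, then combining via Cauchy--Schwarz, one arrives at an inequality of the form
\[
\Bigl(p-\tfrac{s^2}{2s-1}\Bigr)\int \eta^2 u^{p+2s-1}\,dx \le C(s,p)\int u^{2s}\bigl(|\nabla\eta|^2+\eta|\Delta\eta|\bigr)\,dx,
\]
valid for each $s$ in the interval where the coefficient on the left is positive, i.e.\ $s\in(p-\sqrt{p^2-p},\,p+\sqrt{p^2-p})$. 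Combined with Sobolev's inequality this upgrades the local integrability exponent of $u$; iterating (Moser's scheme) yields $u\in L^\infty_{\mathrm{loc}}(\overline{\R^d_+})$. Standard Calder\'on--Zygmund and Schauder estimates, applied together with the homogeneous Dirichlet data, then promote $u$ to $C^{2,\alpha}_{\mathrm{loc}}(\overline{\R^d_+})$, and Theorem \ref{To prove:SOAC} concludes.

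The main obstacle is to close the Moser iteration uniformly for the full range $p>1$. The constraint on $s$ imposed by stability and the one imposed by Sobolev producing a net gain at each step ($s\cdot 2d/(d-2)>p+2s-1$) become compatible in dimension $d\ge 3$ precisely under a Joseph--Lundgren-type restriction; for $p\ge p_{JL}(d)$ the bootstrap is not closed by this scheme alone. To complete the argument in the supercritical range I would resort to a blow-up/rescaling procedure: any potential local singularity of $u$ is analyzed through a sequence $u_\lambda(x)=\lambda^{2/(p-1)}u(\lambda x + x_0)$, whose limit is a nonnegative stable weak solution either of $-\Delta v=v^p$ in $\R^d$ or of \eqref{LEQ} itself; Farina's Liouville theorem in $\R^d$ and Theorem \ref{To prove:SOAC} in $\R^d_+$ then force the limit to be trivial, ruling out blow-up and delivering the missing $L^\infty$ bound.
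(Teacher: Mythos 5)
Your reduction to Theorem \ref{To prove:SOAC} via classical regularity has a genuine gap in the supercritical range, and it is exactly where you flag the difficulty. The Moser/Farina bootstrap driven by stability closes only for $p<p_{JL}(d)$ (this is essentially Farina's original result), and your fallback for $p\ge p_{JL}(d)$ does not work: a blow-up at a potential interior singularity produces a nonnegative \emph{stable} weak solution of $-\Delta v=v^p$ in all of $\R^d$, and for $p\ge p_{JL}(d)$ there is \emph{no} Liouville theorem for such solutions --- the radial singular solution $A|x|^{-2/(p-1)}$ (and even bounded positive radial solutions) are stable in $\R^d$ in precisely that range, by Hardy's inequality. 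So the limit need not be trivial and the $L^\infty$ bound is not delivered. Moreover, if the singularity sits on $\partial\R^d_+$ the blow-up limit is a priori only a \emph{weak} stable solution of \eqref{LEQ}, so invoking Theorem \ref{To prove:SOAC} (stated for $C^2(\R^d_+)\cap C(\overline{\R^d_+})$ solutions) at that point is circular: it is the statement of Corollary \ref{cor:SOAC} itself. In short, the strategy ``weak stable $\Rightarrow$ classical $\Rightarrow$ apply Theorem \ref{To prove:SOAC}'' cannot be closed for all $p>1$ by these tools.

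The paper takes a different route that avoids proving regularity in the supercritical range altogether. For $1<p\le p_S(d)$ it does use elliptic bootstrap to reduce to the classical case, as you do. For $p>p_S(d)$ it reruns the whole proof of Theorem \ref{To prove:SOAC} directly at the level of weak solutions: the monotonicity formula and the Poho\v{z}aev inequality are justified for nonnegative weak stable solutions by approximating $u$ on each half-ball $B_R^+$ by classical solutions $u_n^R$ converging in $H^1$ (Proposition 13 of \cite{df}, see Remark \ref{form-monotonia-coni-troncati}); then the blow-down limit is a homogeneous stable weak solution, which is classified by Lemmas \ref{u to v}, \ref{Liouville for v} and \ref{pohozaev statement}, and the nonnegativity and vanishing of the energy $E(u;\lambda)$ force $u$ itself to be homogeneous, hence zero. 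If you want to salvage your approach, you would need to replace the regularity step by this kind of approximation argument rather than by a local a priori bound.
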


\begin{remark}
Corollary \ref{cor:SOAC} is sharp in the following sense. By Theorem 1.1 in \cite{BVPV}, for $p\in(\frac{d+1}{d-1},p_S(d-1))$, there exists a {\rm singular} solution of the form $u(x)=r^{-\frac2{p-1}}v(\theta)$, where $r=\vert x\vert$, $\theta=x/r$ and $v\in C^2(\overline {\mathbb S^{d-1}_+}) \cap H^1_0(\mathbb S^{d-1}_+)$ is positive and radial (w.r.t the geodesic distance to the north pole). Observe that $u\in H$ if only if $d \geq 3$ and $p\in(p_S(d),p_S(d-1))$, so that the equation is satisfied in the weak sense. However, $u$ is always unstable, see Lemma \ref{u to v} and Lemma \ref{Liouville for v} below. So, the stability assumption cannot be completely removed from the statement of Corollary \ref{cor:SOAC}.
\end{remark}

Next, we extend our study of the Lane-Emden equation to cones, i.e. we consider the equation
\begin{equation} \label{main}
\left\{
\begin{aligned}
-\Delta u &= \vert u\vert^{p-1}u&\quad\text{in $\Omega$,}\\
u&=0&\quad\text{on $\partial\Omega$,}
\end{aligned}
\right. 
\end{equation}
where 
\begin{equation}\label{cone}
\Omega=\left\{
r \theta \, : \, r\in(0,+\infty), \theta\in A
\right\},
\end{equation}
$d\ge 2$ and $A\subset \mathbb{S}^{d-1}$ is a subset of the unit sphere of dimension $d-1$. 

Busca proved that there are no positive solutions $u\in C^2(\Omega)\cap C(\super\Omega)$ of \eqref{main} in the case where $\Omega$ is a convex cone strictly contained in $\R^d_+$, see \cite{busca}. For such a cone, positive solutions are monotone (hence stable\footnote{i.e., they satisfy \eqref{def stable} for all $\varphi \in C^1_c(\Omega)$.} in $\Omega$) and $A=\Omega\cap\mathbb S^{d-1}$ is geodesically convex (see Lemma \ref{geometric0}), hence star-shaped with respect to the north pole (up to a suitable rotation, see Lemma \ref{geometric2}). So the following theorem extends both Busca's result and Theorem \ref{To prove:SOAC}.

\begin{theorem}\label{To prove:SOAC2}
Let $A$ be a $C^{2,\alpha}$ domain contained in the (open) upper-half sphere $\mathbb{S}^{d-1}_+$, which is star-shaped with respect to the north pole. Let $\Omega$ be given by \eqref{cone}. If $p>1$ and $u\in C^2(\Omega)\cap C(\overline{\Omega})$ is a solution of \eqref{main} stable outside a compact set \footnote{i.e., $u$ satisfies \eqref{def stable} for all $\varphi \in C^1_c(\Omega \setminus K)$, where $K \subset \Omega$ is a fixed compact set.}, then $u=0$.
\end{theorem}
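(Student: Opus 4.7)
The approach follows the pattern used for the half-space case (Theorem \ref{To prove:SOAC}) and combines three ingredients: a Pohozaev-type identity whose boundary integrals drastically simplify on cones, a Farina-type bootstrap driven by stability outside a compact set, and a spherical Liouville result for scale-invariant solutions on the section $A$. The geometric hypothesis that $A$ is star-shaped with respect to the north pole is expected to enter only through the last of these three ingredients.

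Multiplying \eqref{main} by $x\cdot\nabla u$ and integrating over $\Omega\cap B_R$, I first observe that on the lateral boundary $\partial\Omega\setminus\{0\}$ the outward normal $\nu$ is orthogonal to every generating ray, so $x\cdot\nu\equiv 0$ there and all lateral contributions vanish. Combined with the standard energy identity (multiplication by $u$), this yields
\[
\Big(\tfrac{d}{p+1}-\tfrac{d-2}{2}\Big)\int_{\Omega\cap B_R}|u|^{p+1}\,dx \;=\; \mathcal{B}(R),
\]
where $\mathcal{B}(R)$ is a surface integral on $\partial B_R\cap\Omega$ built from $u$, $\nabla u$ and $|u|^{p+1}$. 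Next, applying \eqref{def stable} with $\varphi=u\eta$ (and, in a Farina bootstrap, $\varphi=|u|^{q-1}u\eta^m$) for dyadic cutoffs $\eta$ supported in $\Omega\setminus K$, I get $(p-1)\int|u|^{p+1}\eta^2\le\int u^2|\nabla\eta|^2$ and, after Hölder, quantitative integral decay of $|u|^{p+1}$ on dyadic shells $B_{2R}\setminus B_R$.

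Finally, I perform a blow-down $u_\lambda(x)=\lambda^{2/(p-1)}u(\lambda x)$: since $\Omega$ is a cone, each $u_\lambda$ again solves \eqref{main} and is stable outside $K/\lambda$. Using the $L^{p+1}_{\mathrm{loc}}$ bounds just obtained and elliptic regularity up to $\partial\Omega$ (allowed by the $C^{2,\alpha}$ assumption on $A$), I extract a subsequential limit $u_\infty$ that is stable on all of $\Omega\setminus\{0\}$ and, by scale-invariance of the construction, homogeneous of degree $-2/(p-1)$: $u_\infty(x)=|x|^{-2/(p-1)}v(\theta)$. Lemma \ref{u to v} then produces a stable solution $v$ of the associated spherical Lane-Emden problem on $A$, and Lemma \ref{Liouville for v}---which crucially uses the star-shapedness of $A$ with respect to the north pole---forces $v\equiv 0$, hence $u_\infty\equiv 0$. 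Feeding this improved decay back into the Pohozaev identity, I extract a sequence $R_n\to\infty$ along which $\mathcal{B}(R_n)\to 0$; for $p\ne p_S(d)$ the prefactor is non-zero and one concludes $\int_\Omega|u|^{p+1}=0$, hence $u\equiv 0$. The critical exponent is handled by sharpening the spherical argument with an additional Pohozaev term on the sphere.

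The main obstacle I anticipate is the rigorous passage to the blow-down limit: one needs uniform local bounds on $u_\lambda$ that do not degenerate in $\lambda$, together with a clean transfer of stability from $\Omega\setminus(K/\lambda)$ to $\Omega\setminus\{0\}$ in the limit, and control of the boundary behavior up to the lateral surface of the cone. A secondary subtlety is ensuring that the Farina bootstrap yields sufficient decay for every $p>1$---especially for $p\ge p_{JL}(d)$, which is precisely the regime where the present theorem improves upon \cite[Theorem 9(b)]{Farina}.
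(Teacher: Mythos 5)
Your overall architecture (stability-based integral estimates, blow-down, reduction to a spherical problem on $A$, Pohozaev) matches the paper's, but there are two genuine gaps. The first and most serious is the absence of a monotonicity formula. You assert that the blow-down limit $u_\infty$ is homogeneous ``by scale-invariance of the construction,'' but a subsequential limit of $u_{\lambda_n}$ is not automatically scale-invariant; the paper proves homogeneity by introducing the Pacard-type energy $E(u;\lambda)$ of \eqref{E}, showing via \eqref{derivative of E} that it is non-decreasing and bounded along the blow-down, so that the integral of $\bigl(\partial_r u+\tfrac{2}{p-1}\tfrac{u}{r}\bigr)^2$ over annuli tends to zero. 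The same tool is what closes the argument at the end: once $u_\infty=0$ one gets $\lim_{\lambda\to\infty}E(u;\lambda)=0$, while boundedness of $u$ near the vertex gives $\lim_{\lambda\to0^+}E(u;\lambda)\ge0$, and monotonicity then forces $E\equiv0$, hence $u$ homogeneous and continuous at $0$, hence $u=0$. Your proposed substitute --- feeding $u_\infty=0$ back into the Pohozaev identity on $B_{R_n}$ to kill the boundary term $\mathcal{B}(R_n)$ --- does not obviously work in the supercritical range, where $u\notin L^{p+1}(\Omega)$ globally and the solid integrals in the identity need not converge.

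The second gap is in the spherical step. You claim Lemma \ref{Liouville for v} ``crucially uses the star-shapedness of $A$''; it does not --- it is a purely variational argument valid under the numerical condition \eqref{numerical condition}, with no geometric hypothesis. Star-shapedness enters only through the spherical Pohozaev inequality of Lemma \ref{pohozaev statement} (via the sign of $\left<\nabla'\phi,\nu\right>$ on $\partial A$), which is what handles the exponents $p>p_S(d-1)$. The point you are missing is that neither spherical argument alone covers all $p>p_S(d)$: the paper needs the dichotomy that every such $p$ either satisfies \eqref{numerical condition} (because $p_S(d-1)<p_{JL}(d)$) or exceeds $p_S(d-1)$. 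A single ``stable spherical Liouville lemma'' as in your plan would fail in the intermediate range where stability alone is insufficient. Finally, your treatment of the critical case $p=p_S(d)$ is too vague: the paper does not ``sharpen the spherical argument'' but instead uses the multiplier $\partial u/\partial x_d$ to derive $\int_{\partial\Omega\setminus\{0\}}\nu_d|\nabla u|^2=0$ (Proposition \ref{prop3}, where the graph property of $\partial\Omega$, a consequence of star-shapedness, fixes the sign of $\nu_d$) and concludes by unique continuation.
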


\begin{remark}
We recall that a set $A\subseteq\mathbb{S}^{d-1}$ is said to be star-shaped with respect to a point $\theta_0\in A$ if for every $\theta\in A$ any minimal geodesic path from $\theta_0$ to $\theta$ remains inside $A$. Note that when $A\subseteq\mathbb{S}^{d-1}_+$, there is at most one such minimal geodesic path. 
\end{remark}

Finally, for general cones (not necessarily star-shaped, not necessarily contained in a half-space), as follows from the proof of Theorem \ref{To prove:SOAC2}, we have the following partial result
\begin{cor}\label{lastcor}
Let $\Omega\subset\R^d$ be a cone given by \eqref{cone} and $p>1$. Assume that
$$
p\neq p_S(d)\quad\text{and}\quad    p\mu-\frac{(d-2)^2}{4}+(p-1)\lambda_1\geq0,
$$
where $\mu=\frac{2}{p-1}\left(d-1-\frac{p+1}{p-1}\right)$ and $\lambda_1$ is the principal eigenvalue of the Laplace-Beltrami operator on $A=\Omega\cap\mathbb S^{d-1}$.
If $u\in C^2(\Omega)\cap C(\overline{\Omega})$ is a solution of \eqref{main} stable outside a compact set, then $u=0$.
\end{cor}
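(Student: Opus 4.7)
The plan is to isolate the analytic core of the proof of Theorem \ref{To prove:SOAC2}: the star-shape and half-space hypotheses there are only used to ensure that the algebraic inequality of the corollary holds automatically, while the engine --- a Farina-type stability-plus-equation scheme on the Emden-Fowler cylinder --- goes through on any cone of the form \eqref{cone}. The condition $p\mu -(d-2)^2/4+(p-1)\lambda_1\geq 0$ admits the conceptual reading that the reduction of the Emden-Fowler equation onto the first eigenspace of $-\Delta_A$ has a linearly unstable radial equilibrium, and the Farina scheme leverages precisely this instability to conclude $u\equiv 0$.

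Concretely, I would first set $u(r\theta) = r^{-\alpha} w(t,\theta)$ with $\alpha = 2/(p-1)$ and $t=\log r$, turning \eqref{main} into
\begin{equation*}
-w_{tt} - (d-2-2\alpha)\,w_t - \Delta_A w + \mu w \;=\; |w|^{p-1} w \qquad\text{on }\mathbb{R}\times A,
\end{equation*}
with Dirichlet condition on $\mathbb{R}\times\partial A$. The matching substitution $\varphi = r^{-(d-2)/2}\psi$ in \eqref{def stable}, combined with the algebraic identity $\mu + ((d-2)/2-\alpha)^2 = (d-2)^2/4$ which symmetrizes the radial drift, rewrites the stability hypothesis in the symmetric form
\begin{equation*}
\int_{\mathbb{R}\times A}\Bigl[\psi_t^2 + |\nabla_A\psi|^2 + \tfrac{(d-2)^2}{4}\psi^2 - p|w|^{p-1}\psi^2\Bigr]\,dt\,d\theta \;\geq\; 0
\end{equation*}
for every $\psi\in C^1_c$ supported outside a bounded strip of the cylinder.

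Plugging the Farina test function $\psi = |w|^{(q-1)/2}w\,\eta$ with $q\in(q_-(p),q_+(p))$, $q_\pm = 2p-1\pm 2\sqrt{p(p-1)}$, into the symmetrized stability inequality, and combining with the equation multiplied by $|w|^{q-1}w\eta^2$ and integrated by parts on the cylinder, yields (after using the min-max inequality $\int_A|\nabla_A\phi|^2 \geq \lambda_1\int_A\phi^2$ on the transverse gradient) a Farina-type bound
\begin{equation*}
C_1(p,q)\int |w|^{p+q}\eta^2 \;\leq\; C_2\int|w|^{q+1}\bigl(|\nabla\eta|^2+|\eta\,\Delta\eta|\bigr).
\end{equation*}
A direct algebraic optimization in $q$ --- equivalent to writing down the linearized instability condition for the 1-d first-mode reduction described above --- shows that $C_1$ can be made strictly positive for some admissible $q$ precisely when $p\mu-(d-2)^2/4+(p-1)\lambda_1\geq 0$; the exclusion $p\neq p_S(d)$ (equivalently $\alpha\neq (d-2)/2$, i.e.\ the drift coefficient $\beta=(d-2)/2-\alpha$ is nonzero) is used to prevent the quadratic form underlying $C_1$ from degenerating at the natural optimal $q$.

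Finally, I would take cut-offs $\eta_R$ on the cylinder with $\eta_R \equiv 1$ on $\{T<|t|<R\}$ and $|\nabla\eta_R|\leq 2/R$; a standard H\"older bootstrap makes the right-hand side of the above inequality vanish as $R\to\infty$, forcing $\int|w|^{p+q}=0$ outside the strip and hence $u\equiv 0$. The main obstacle in this plan is the clean algebraic extraction of the sharp coefficient $(p-1)$ in front of $\lambda_1$ in the coercivity threshold for $C_1$: balancing the contribution of the symmetrization drift $(d-2)^2/4$ against the transverse min--max eigenvalue $\lambda_1$ through the Farina discriminant $p-(q+1)^2/(4q)$, while optimizing $q$ over its admissible range, is the only delicate computation, and it is exactly the point where the geometric conditions of Theorem \ref{To prove:SOAC2} would make the inequality automatically satisfied.
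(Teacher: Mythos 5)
Your reduction to the Emden--Fowler cylinder and the algebra combining the equation (tested with $|w|^{q-1}w\eta^2$) with the symmetrized stability inequality are correct in spirit: for $q=1$ and for profiles $w$ independent of $t$ this is exactly the computation the paper performs on the cross-section $A$ in Lemma \ref{Liouville for v}, and the transverse Poincar\'e inequality is precisely how the term $(p-1)\lambda_1$ is produced there. But your plan to run this directly on $w(t,\theta)$ with cut-offs $\eta_R$ and let $R\to\infty$ has a genuine gap. The cylinder equation carries the first-order drift $-\beta w_t$ with $\beta=d-2-\tfrac{4}{p-1}$, and testing it with $|w|^{q-1}w\eta_R^2$ produces the term $\tfrac{2\beta}{q+1}\int |w|^{q+1}\,\eta_R\,\partial_t\eta_R$. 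The only a priori control at $t\to+\infty$ is the growth bound of Lemma \ref{energy}, which says that $\int_{\{T<t<T+1\}\times A}|w|^{p+1}$ is bounded uniformly in $T$ but does \emph{not} tend to zero; hence with $|\partial_t\eta_R|\lesssim R^{-1}$ on a transition region of $t$-length $R$ this term is $O(1)$, not $o(1)$. Worse, for $p>p_S(d)$ one has $\beta>0$ and $\partial_t\eta_R\le 0$ on the outer transition region, so the term lands on the wrong side of the combined inequality and cannot be discarded by sign. (In the original variables these are the boundary terms on $\partial B_R$, which do not vanish because a supercritical solution stable outside a compact set need not decay.) This is exactly the obstruction that limits the direct Farina scheme to $p<p_{JL}(d)$ and that forces the paper to pass through the blow-down family $(u_\lambda)$, the Pacard-type monotonicity formula, and the strong $H^1\cap L^{p+1}(\Omega_R)$ convergence to a homogeneous limit $u_\infty=r^{-2/(p-1)}v(\theta)$: for the homogeneous profile $w_t\equiv 0$, the drift disappears, and your computation (with $q=1$) closes on $A$ alone. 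You have skipped the hardest part of the argument rather than bypassed it; the paper's proof of the corollary simply notes that in the proof of Theorem \ref{To prove:SOAC2} the star-shapedness is used only for $p=p_S(d)$ (Proposition \ref{prop3}) and for exponents outside the range \eqref{numerical condition} (Lemma \ref{pohozaev statement}).

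Two further points. First, the corollary also covers $1<p<p_S(d)$, which the paper handles via the integral estimates of Lemma \ref{l2bis} and the Poho\v{z}aev identity (Propositions \ref{prop1} and \ref{prop2}); your proposal does not address this range, and the true role of the exclusion $p\neq p_S(d)$ is not a degeneration of your quadratic form at the optimal $q$ but the fact that the critical case requires the boundary identity of Proposition \ref{prop3}, which needs the graph hypothesis absent from the corollary. Second, your claim that $C_1$ can be made strictly positive precisely when $p\mu-\tfrac{(d-2)^2}{4}+(p-1)\lambda_1\ge 0$ cannot hold at equality: there the coercivity constant vanishes, and one must argue as in Lemma \ref{Liouville for v} that $v$ is then a first eigenfunction of $-\Delta'$ on $A$, which together with equation \eqref{Eq on v} forces $v=0$.
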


\begin{remark}
In particular, the Corollary applies to all $p\ge p_0$ for some (explicitly computable) $p_0$ depending on $d$ and $\lambda_1$. 
\end{remark}

\begin{remark}
Since $\Omega$ is not smooth at its vertex, some care is needed when dealing with regularity/integrability properties of the solution. The reader interested only in the proof of Theorem \ref{To prove:SOAC} can safely skip these considerations in the proof presented below. 
\end{remark}





\section{Proof of Theorem \ref{To prove:SOAC2}}

We begin by proving that any classical solution of \eqref{main} is a weak solution of \eqref{main} in the following sense.
\begin{definition}\label{weak definition 2}
Set $\Omega_R := \Omega \cap B_R$ for every $R>0$. We say that a measurable function $u$ defined on $\Omega$ is a weak solution of \eqref{main} if $u\in H^1(\Omega_R)\cap L^{p+1}(\Omega_R)$ for every $R>0$, $u$ solves $ - \Delta u \,=\, \vert u\vert^{p-1}u $ in the sense of distributions of $\Omega$ and if for all $\psi\in C^1_c(\R^d)$ we have $u \psi \in H^1_0(\Omega)$.
\end{definition}

\begin{lemma}\label{ls} Assume $ p>1$ and let $u\in C^2(\Omega)\cap C(\super\Omega)$ be a solution of \eqref{main}. Then, $u$ is a weak solution of \eqref{main}. 
Furthermore, every weak solution $u$ to \eqref{main} satisfies, 
\begin{equation}\label{eqfaibleplus}
      \int_{\Omega}\nabla u \nabla \phi =  \int_{\Omega} 
      \vert u \vert^{p-1} u \phi, 
\end{equation}
for all $\phi \in H^1_0(\Omega)\cap L^{p+1}(\Omega)$ with bounded support. 


\end{lemma}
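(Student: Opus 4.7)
My plan is to address the two assertions of the lemma in turn, with the same vertex-cutoff technique underlying both.

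\textbf{Classical implies weak.} Since $u \in C(\overline\Omega)$ and $\overline{\Omega_R}$ is compact, $u$ is bounded on $\Omega_R$, so $u \in L^{p+1}(\Omega_R)$. For the local $H^1$ estimate, I would test the pointwise equation against $u\eta_R^2\zeta_\delta^2$, where $\eta_R$ is a smooth cutoff equal to $1$ on $B_R$ and supported in $B_{2R}$, and $\zeta_\delta$ vanishes on $B_\delta$ and equals $1$ off $B_{2\delta}$. This test function vanishes on $\partial\Omega$ (because $u$ does) and near the vertex, so integration by parts on $\Omega\cap(B_{2R}\setminus B_\delta)$ produces no boundary terms; a Young inequality to absorb the cross gradient term then bounds $\int|\nabla u|^2\eta_R^2\zeta_\delta^2$ in terms of $\|u\|_{L^\infty(\Omega_{2R})}^{p+1}|B_{2R}|$ plus $C\int u^2|\nabla(\eta_R\zeta_\delta)|^2$. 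The only delicate piece is $\int u^2|\nabla\zeta_\delta|^2$, which is $O(\delta^{d-2})$ with a linear cutoff if $d\ge 3$ and $O(1/\log(1/\delta))$ with a logarithmic profile if $d=2$. Fatou as $\delta\to 0$ yields $\nabla u\in L^2(\Omega_R)$. The distributional equation is immediate from $u\in C^2(\Omega)$. For $u\psi\in H^1_0(\Omega)$ with $\psi\in C^1_c(\R^d)$, the same cutoff estimates give $u\psi\zeta_\delta\to u\psi$ in $H^1(\Omega)$; since $u\psi\zeta_\delta$ is compactly supported in $\overline\Omega\setminus\{0\}$ and vanishes on the $C^{2,\alpha}$ portion $\partial\Omega\setminus\{0\}$, it lies in $H^1_0$ by standard density on smoothly bounded domains, and the limit is in $H^1_0(\Omega)$.

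\textbf{Extended weak formulation.} For $\phi\in C^\infty_c(\Omega)$, \eqref{eqfaibleplus} is the distributional equation itself. To extend to $\phi\in H^1_0(\Omega)\cap L^{p+1}(\Omega)$ with bounded support, I would first truncate: writing $T_M$ for the standard truncation at height $M$, one has $T_M\phi\to\phi$ in both $H^1$ and $L^{p+1}$ by dominated convergence (since $|T_M\phi|\le|\phi|$ and $|\nabla T_M\phi|\le|\nabla\phi|$), and both sides of \eqref{eqfaibleplus} are continuous under this limit via Cauchy--Schwarz (using $\nabla u\in L^2$ on the support) and Hölder (using $u\in L^{p+1}$ on the support). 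This reduces matters to bounded $\phi$ with bounded support. For such $\phi$, the vertex cutoff $\phi\zeta_\delta\to\phi$ converges in $H^1\cap L^{p+1}$, and $\phi\zeta_\delta\in H^1_0(\Omega\cap\{|x|>\delta/2\})$ can be approximated in $H^1$ by $C^\infty_c(\Omega)$ functions preserving the $L^\infty$ bound via truncation--mollification; on a bounded set, a uniform $L^\infty$ bound combined with $L^2$-convergence yields $L^{p+1}$-convergence by interpolation, so the identity for $C^\infty_c$ passes to the limit.

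\textbf{Main obstacle.} The principal difficulty throughout is the non-smooth vertex of $\Omega$, where neither interior regularity for $u$ nor standard boundary density results apply. It is circumvented by the vertex cutoffs $\zeta_\delta$, whose gradients can be made arbitrarily small in $L^2$---reflecting the vanishing $H^1$-capacity of a single point in dimension $d\ge 2$. The only genuinely dimension-sensitive point is the choice of cutoff profile (linear for $d\ge 3$, logarithmic for $d=2$); everything else follows from the $C^{2,\alpha}$ regularity of $\partial\Omega\setminus\{0\}$, which is inherited from the $C^{2,\alpha}$ regularity of $A$.
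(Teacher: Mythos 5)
Your proposal follows essentially the same route as the paper: test the equation with $u$ times a squared cutoff vanishing near the vertex, absorb the cross term by Young's inequality, use that the point has zero $H^1$-capacity to remove the vertex cutoff, and then extend \eqref{eqfaibleplus} from $C^\infty_c(\Omega)$ test functions by density. The one caveat is that your argument for $u\psi\in H^1_0(\Omega)$ invokes the $C^{2,\alpha}$ regularity of $\partial\Omega\setminus\{0\}$, which is not assumed in this lemma (it is only hypothesised later, in Theorem \ref{To prove:SOAC2}, and the lemma is also used for general cones as in Corollary \ref{lastcor}); the paper avoids this by citing the general fact that $v\in H^1(A)\cap C^0(\overline{A})$ with $v=0$ on $\partial A$ belongs to $H^1_0(A)$ for an arbitrary open set $A$, which you should use instead.
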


%



\proof Multiply equation \eqref{main} by $u\zeta^2$, $\zeta\in C^{0,1}_{c}(\R^d \setminus \{0\}),$ and integrate by parts to get 
\begin{align*}
\int \vert \nabla u\vert^2\zeta^2 &= \int \vert u\vert^{p+1}\zeta^2 - 2\int u\zeta\nabla u\cdot\nabla\zeta\\
&\le \int \vert u\vert^{p+1}\zeta^2 + \frac12 \int \vert \nabla u\vert^2\zeta^2 + 2 \int u^2 \vert\nabla\zeta\vert^2\\
\end{align*}
Hence,
\begin{equation}\label{quasiH1}
\int \vert \nabla u\vert^2\zeta^2\le 2\| u \|_{L^\infty({\Omega_R})}^{p+1} \int \zeta^2 + 4
\| u \|_{L^\infty({\Omega_R})}^2\int  \vert\nabla\zeta\vert^2.
\end{equation}
Choosing $\zeta=\psi\zeta_n$, where $\zeta_n$ is a standard cut-off function away from the origin, and passing to the limit in the latter inequality we see that \eqref{quasiH1} remains true for every $\zeta=\psi\in C^1_c(\R^d)$. Hence, $u\in H^1(\Omega_R) \cap L^{p+1} (\Omega_R)$ for every $R>0$ and $u$ is a solution to $ - \Delta u \,=\, \vert u\vert^{p-1}u $ in the sense of distributions of $\Omega$. 
Also, $u \psi \in H^1_0(\Omega_R)$ for any $ R>>1$, since $\psi$ has compact support,\footnote{Recall that: $v \in H^1(A) \cap C^0(\super A)$ and $v = 0 $ on $\partial A$ implies $v \in H^1_0(A)$ for every open set $A \subset \R^d$.} and thus $u \psi \in H^1_0(\Omega)$.
Finally, \eqref{eqfaibleplus} follows by a standard density argument (by passing to the limit in the distributional formulation of $ - \Delta u \,=\, \vert u\vert^{p-1}u $).

\hfill\qed

\begin{remark}\label{remark7}
If $\Omega=\R^d_+$, Definitions \ref{weaksolution} and \ref{weak definition 2} coincide.
\end{remark}

Next we recall the following 

\begin{definition}\label{def-stability-for cones}
A weak solution of \eqref{main} is 
\begin{itemize}
    \item \texttt{stable} if $u$ satisfies \eqref{def stable} for all $\varphi \in C^1_c(\Omega)$,
    \item \texttt{stable outside a compact set} if there is a compact set $K \subset \Omega$ such that $u$ satisfies \eqref{def stable} for all $\varphi \in C^1_c(\Omega \setminus K)$. 
\end{itemize}
\end{definition}

Let us focus next on 
\subsection{The subcritical and critical cases $p\le p_{S}(d)$} 

\

This part of the proof is inspired by \cite{esteban-lions} but some care is needed since the domain $\Omega$ is not necessarily smooth (at the origin)\footnote{Take e.g. $A=\mathbb S^{d-1}\cap\{x\in\R^d : \vert x_d\vert<1/2\}$ and observe in passing that $\Omega$ is not contained in a half-space in this case.}  and so solutions might not belong to 
$W^{2,q}(\Omega_R)$ with $q\in(1,+\infty)$ as in their case. The following integral estimate holds.
\begin{lemma}\label{l2bis}
Let $1<p\le p_S(d)$. Let $u\in C^2(\Omega)\cap C(\super\Omega)$ be a solution of \eqref{main} stable outside a compact set $K$. Then, $u\in L^{p+1}(\Omega)$ and $\vert\nabla u\vert\in L^2(\Omega)$.
\end{lemma}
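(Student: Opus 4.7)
My plan is to adapt the strategy of \cite{esteban-lions} by combining the stability inequality \eqref{def stable} with the weak formulation \eqref{eqfaibleplus}, testing both against suitable powers of a single cutoff function $\eta_R$. The exponent of the cutoff will be chosen so that the Hölder scaling is tight exactly at the Sobolev critical exponent, which is what forces the restriction $p \le p_S(d)$. Because $\Omega$ is not smooth at its vertex $0$, care must be taken to keep the support of $\eta_R$ both away from $K$ and away from $0$, so that classical elliptic regularity at those points is never needed.

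Concretely, I would fix $R_0$ large enough that $K \subset \Omega_{R_0}$, and, for $R > 2R_0$, pick $\eta_R \in C^1_c(\Omega \setminus K)$ with $0 \le \eta_R \le 1$, $\eta_R \equiv 1$ on most of $\Omega_R$, vanishing in a neighborhood of $K$, near the vertex $0$, and outside $\Omega_{2R}$, with $|\nabla \eta_R| \le C/R$ on the outer annulus. Setting $k := (p+1)/(p-1)$, I would use $\varphi = u\eta_R^k$ in \eqref{def stable} and $\phi = u\eta_R^{2k}$ in \eqref{eqfaibleplus}. Expanding both and subtracting makes the terms $\int |\nabla u|^2 \eta_R^{2k}$ and $\int u \eta_R^{2k-1}\nabla u\cdot\nabla\eta_R$ cancel exactly, leaving
\[
(p-1)\int |u|^{p+1}\eta_R^{2k} \le k^2 \int u^2 \eta_R^{2k-2} |\nabla \eta_R|^2.
\]
The choice $k=(p+1)/(p-1)$ is designed precisely so that Hölder's inequality with exponents $(p+1)/2$ and $(p+1)/(p-1)$ erases the residual power of $\eta_R$ on the right, and after absorbing one factor of $\int |u|^{p+1}\eta_R^{2k}$ on the left one obtains
\[
\int |u|^{p+1}\eta_R^{2k} \le C \int |\nabla \eta_R|^{2(p+1)/(p-1)} \le C R^{\,d - 2(p+1)/(p-1)}.
\]
Since $p \le p_S(d)$ is equivalent to $d - 2(p+1)/(p-1) \le 0$, this right-hand side is uniform in $R$, and combined with the trivial fact that $u$ is bounded on the compact set $\overline{\Omega_{2R_0}}\subset\overline\Omega$, monotone convergence gives $u\in L^{p+1}(\Omega)$.

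The bound $|\nabla u|\in L^2(\Omega)$ then follows from the energy identity obtained by testing \eqref{eqfaibleplus} with $\phi = u\eta_R^{2k}$: the cross term is absorbed via Young's inequality and the uniform $L^{p+1}$ estimate just proven feeds into the right-hand side, providing a bound on $\int |\nabla u|^2 \eta_R^{2k}$ uniform in $R$. The step I expect to require the most care is the justification that $\varphi = u\eta_R^k$ is admissible in \eqref{def stable}, since stability is posited only for $C^1_c(\Omega\setminus K)$ test functions while $u$ is merely $C^2(\Omega)\cap C(\overline\Omega)$; this is handled by a density/approximation argument in the spirit of Lemma \ref{ls}, which is also where the careful construction of $\eta_R$ avoiding the singular vertex $0$ becomes essential.
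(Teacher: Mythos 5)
Your argument is essentially the paper's own proof: the identity $|\nabla(u\eta^k)|^2 - u^2|\nabla(\eta^k)|^2 = \nabla u\cdot\nabla(u\eta^{2k})$ produces exactly the cancellation you describe, the choice $k=\frac{p+1}{p-1}$ and the subsequent H\"older step are identical to the paper's $\varphi=\psi^m$ with $m=\frac{p+1}{p-1}$, and the conclusion via $d-2\frac{p+1}{p-1}\le 0$ is the same. The one correction needed is in your choice of cutoff: it must be taken in $C^1_c(\R^d\setminus K)$, \emph{not} $C^1_c(\Omega\setminus K)$, because a cutoff forced to vanish near the lateral boundary $\partial\Omega$ both destroys the uniform bound $\int|\nabla\eta_R|^{2(p+1)/(p-1)}\le CR^{d-2\frac{p+1}{p-1}}$ (the boundary layer contributes a divergent amount as it shrinks) and never lets the estimate exhaust all of $\Omega$; with the correct choice, the admissibility of $u\eta_R^k$ in \eqref{def stable} is precisely the density point you flag at the end, secured by the fact that $u\eta_R^k\in H^1_0(\Omega\setminus K)$ since $u=0$ on $\partial\Omega$.
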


\begin{proof}
Let $u\in C^2(\Omega)\cap C(\super\Omega)$ be a solution of \eqref{main} stable outside a compact set $K$ and let $\varphi \in C^1_c(\R^d)$ supported on $\R^d \setminus K$. Then, $\phi = u \varphi^2$ has bounded support and belongs to $H^1_0(\Omega)\cap L^{p+1}(\Omega)$ (thanks to Lemma \ref{ls} and Definition \ref{weak definition 2}) and so, from \eqref{eqfaibleplus}, we get 

\begin{equation}\label{weak sol}
    \int
    \nabla u\nabla(u\varphi^2)=\int
    \vert u\vert^{p+1}\varphi^2.
\end{equation}
We estimate the left-hand side of \eqref{weak sol} using the stability of $u$ in $\Omega \setminus K$ 
\begin{align}\label{weak sol2}
    \int
    \vert u\vert^{p+1}\varphi^2=
    \int
    \nabla u\nabla(u\varphi^2)&=\int
    |\nabla(u\varphi)|^2-\int
    u^2|\nabla\varphi|^2 \\
    &\geq p\int
    \vert u\vert^{p+1}\varphi^2-\int\,u^2|\nabla\varphi|^2,\nonumber 
\end{align}
where in the latter we have used that $\phi = u \varphi^2$ belongs to $H^1_0(\Omega \setminus K)$, and so it can be inserted into \eqref{def stable}. 

Combining the above lines and observing that $p>1$, we reach
\begin{equation}\label{stable result}
    (p-1)\int
    \vert u\vert^{p+1}\varphi^2\leq\int
    u^2|\nabla\varphi|^2.
\end{equation}
Returning to \eqref{weak sol2}, it follows that
\begin{equation}\label{stable result2}
    \frac {p-1}p\int
    \vert \nabla (u\varphi)\vert^2\leq\int
    u^2|\nabla\varphi|^2.
\end{equation}
Taking $\varphi:=\psi^m$, for some some smooth non-negative function $\psi \in C^1_c(\R^d)$ supported on $\R^d \setminus K$ and $m>1$ gives
\begin{equation*}
    (p-1)\int
    \vert u\vert^{p+1}\psi^{2m}\leq\,m^2\int
    u^2\psi^{2m-2}|\nabla\psi|^2.
\end{equation*}
Using H\"older's inequality,
\begin{equation*}
    (p-1)\int
    \vert u\vert^{p+1}\psi^{2m}\leq\,m^2\left(\int
    \vert u\vert^{p+1}\psi^{(m-1)(p+1)}\right)^{\frac{2}{p+1}}\left(\int
    |\nabla\psi|^{2\frac{p+1}{p-1}}\right)^{\frac{p-1}{p+1}}.
\end{equation*}
We can take $m:=\frac{p+1}{p-1}$, so the above inequality simplifies to 
\begin{equation}\label{Lp+1 control tf}
    \int
    \,\vert u\vert^{p+1}\psi^{2m}\leq\,\left[\frac{(p+1)^2}{(p-1)^3}\right]^{\frac{p+1}{p-1}}\int
    |\nabla\psi|^{2\frac{p+1}{p-1}}.
\end{equation}
Fix $R_0>0$ such that $K\subset\Omega_{R_0}$ and let $R>2R_0$ so $u$ is stable outside of $\Omega_R$. If $\psi_R(x)=\psi_1(x/R)$ is a standard cutoff function, then $\psi=(1-\psi_{R_0})\psi_R$ is supported in $B_{2R}\setminus K$ and so we may apply \eqref{Lp+1 control tf}. It follows that 
\begin{equation}\label{bound0}
    \int_{\Omega_R\setminus\Omega_{2R_0}}\,\vert u\vert^{p+1}\le
    C_p\int_{\Omega_{2R_0}}
    |\nabla\psi|^{2\frac{p+1}{p-1}}+
    C_p\int_{\Omega_{2R}\setminus \Omega_{2R_0}}
    |\nabla\psi|^{2\frac{p+1}{p-1}}
    \leq\,C_{p,d}(R_0^{d-2\frac{p+1}{p-1}}+R^{d-2\frac{p+1}{p-1}}).
\end{equation}
Using the same test function in \eqref{stable result2} yields similarly
\begin{equation}\label{bound1}
    \int_{\Omega_R\setminus\Omega_{2R_0}}\,\vert\nabla u\vert^{2}
    \leq\,C_{p,d}(R_0^{d-2\frac{p+1}{p-1}}+R^{d-2\frac{p+1}{p-1}}).
\end{equation}
The lemma follows by letting $R\to+\infty$, since $p\le p_S(d)$ implies $d-2\frac{p+1}{p-1}\le 0$.
\end{proof}

Thanks to Lemma \ref{l2bis}, we derive the following Poho\v{z}aev identity.
\begin{prop}\label{prop1} Under the assumptions of Lemma \ref{l2bis}, there holds
$$
\left(1-\frac d2\right)\int_\Omega\vert\nabla u\vert^2 + \frac d{p+1}\int_\Omega \vert u\vert^{p+1} = 0.
$$
\end{prop}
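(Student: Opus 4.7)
The plan is to derive the identity by testing \eqref{main} against the Pohozaev multiplier $x\cdot\nabla u$. Since $\Omega$ is unbounded and singular at the origin, and $u$ is only known to be continuous there, I would work on the truncated annulus $\Omega_{\rho,R}:=\Omega\cap(B_R\setminus\overline{B_\rho})$ and then pass to the limit $\rho\to 0$, $R\to\infty$ along carefully chosen subsequences. Because $A\in C^{2,\alpha}$, the boundary $\partial\Omega\setminus\{0\}$ is smooth, so standard up-to-the-boundary elliptic regularity applied to $-\Delta u=|u|^{p-1}u$ with zero Dirichlet data gives $u\in C^2\bigl(\overline{\Omega}\setminus\{0\}\bigr)$, and the divergence theorem on the Lipschitz domain $\Omega_{\rho,R}$ is legitimate.

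Using the pointwise identities
$$
-\Delta u\,(x\cdot\nabla u)=\Bigl(1-\tfrac{d}{2}\Bigr)|\nabla u|^2-\operatorname{div}\!\Bigl(\nabla u\,(x\cdot\nabla u)-\tfrac{x}{2}|\nabla u|^2\Bigr),\qquad |u|^{p-1}u\,(x\cdot\nabla u)=\tfrac{1}{p+1}\,x\cdot\nabla|u|^{p+1},
$$
and the divergence theorem on $\Omega_{\rho,R}$, one obtains
$$
\Bigl(1-\tfrac{d}{2}\Bigr)\int_{\Omega_{\rho,R}}|\nabla u|^2 + \tfrac{d}{p+1}\int_{\Omega_{\rho,R}}|u|^{p+1}=\int_{\partial\Omega_{\rho,R}}\Bigl[(x\cdot\nabla u)\,\partial_\nu u-\tfrac{x\cdot\nu}{2}|\nabla u|^2+\tfrac{x\cdot\nu}{p+1}|u|^{p+1}\Bigr]d\sigma.
$$
On the lateral part $\partial\Omega\cap\{\rho<|x|<R\}$ all three boundary integrands vanish: $u\equiv 0$ there forces $|u|^{p+1}=0$ and $\nabla u=(\partial_\nu u)\nu$, while the conical structure of $\Omega$ yields $x\cdot\nu\equiv 0$, so $x\cdot\nabla u=(\partial_\nu u)(x\cdot\nu)=0$ as well.

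It remains to kill the spherical boundary terms on $\Omega\cap\partial B_\rho$ and $\Omega\cap\partial B_R$, where $\nu=\pm x/|x|$ and each surviving integrand is majorized by $r\int_{\Omega\cap\partial B_r}\bigl(|\nabla u|^2+|u|^{p+1}\bigr)\,d\sigma$ for $r\in\{\rho,R\}$. By Lemma \ref{l2bis}, $|\nabla u|^2$ and $|u|^{p+1}$ belong to $L^1(\Omega)$, so the coarea formula yields $G(r):=\int_{\Omega\cap\partial B_r}\bigl(|\nabla u|^2+|u|^{p+1}\bigr)d\sigma\in L^1(0,\infty)$. This forces $\liminf_{r\to 0^+}r\,G(r)=\liminf_{r\to\infty}r\,G(r)=0$, for otherwise $G(r)\geq c/r$ near an endpoint would contradict integrability. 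Choosing sequences $\rho_n\to 0$ and $R_n\to\infty$ realising these $\liminf$s and applying monotone convergence to the interior integrals yields the claimed identity. The main obstacle lies here: one has no pointwise decay of $u$ or $\nabla u$ at the origin or at infinity, only the $L^1$ control coming from Lemma \ref{l2bis}, which is precisely why the passage to the limit must be performed along subsequences rather than continuously.
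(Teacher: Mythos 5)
Your proof is correct and follows essentially the same route as the paper: the Poho\v{z}aev multiplier $x\cdot\nabla u$, the observation that $x\cdot\nu=0$ and $\nabla u=u_\nu\nu$ kill the lateral boundary terms on the cone, and the $L^1$ bounds of Lemma \ref{l2bis} to dispose of the remaining terms. The only (inessential) difference is technical: you use sharp annular truncation together with a coarea/$\liminf$ selection of good radii $\rho_n,R_n$, whereas the paper uses smooth cutoffs $\psi_n$ and bounds the resulting error by a constant times $\int_{\Omega\cap[n<\vert x\vert<2n]}g$ with $g\in L^1(\Omega)$.
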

\begin{proof}
Let $(\zeta_n)$ denote a standard truncation sequence near the origin and $(\theta_n)$ a standard truncation sequence near infinity. Set $\psi_n=\zeta_n\theta_n$ for $n\ge1$. Multiply equation \eqref{main} by $(x\cdot\nabla u)\psi_n$ and integrate by parts. Then,
$$
-\int_\Omega\Delta u (x\cdot\nabla u)\psi_n=\int_\Omega \nabla u\cdot\nabla[(x\cdot\nabla u)\psi_n] - \int_{\partial\Omega\setminus\{0\}} \frac{\partial u}{\partial\nu}(x\cdot\nabla u)\psi_n=\int_\Omega\vert u\vert^{p-1}u(x\cdot\nabla u)\psi_n = \int_\Omega x\cdot\nabla \frac{\vert u\vert^{p+1}}{p+1} \psi_n.
$$
So,
\begin{multline*}
\int_\Omega \nabla u\cdot\nabla(x\cdot\nabla u)\psi_n + \int_\Omega \nabla u\cdot\nabla \psi_n (x\cdot\nabla u)- \int_{\partial\Omega\setminus\{0\}} \frac{\partial u}{\partial\nu}(x\cdot\nabla u)\psi_n=\\
-\int_\Omega \frac{\vert u\vert^{p+1}}{p+1}(d\psi_n+x\cdot\nabla \psi_n) + \int_{\partial\Omega\setminus\{0\}} \frac{\vert u\vert^{p+1}}{p+1}(x\cdot\nu)\psi_n
=-\frac d{p+1}\int_\Omega \vert u\vert^{p+1}\psi_n -\int_\Omega \frac{\vert u\vert^{p+1}}{p+1}(x\cdot\nabla\psi_n).
\end{multline*}
It follows that
$$
\int_\Omega \vert\nabla u\vert^2\psi_n + \int_\Omega \left(x\cdot\nabla\frac{\vert\nabla u\vert^2}{2}\right)\psi_n +\int_\Omega \nabla u\cdot\nabla \psi_n (x\cdot\nabla u)- \int_{\partial\Omega\setminus\{0\}} \frac{\partial u}{\partial\nu}(x\cdot\nabla u)\psi_n = -\frac d{p+1}\int_\Omega \vert u\vert^{p+1}\psi_n -\int_\Omega \frac{\vert u\vert^{p+1}}{p+1}(x\cdot\nabla\psi_n),
$$
i.e.
\begin{multline*}
    \int_\Omega \vert\nabla u\vert^2\psi_n - \int_\Omega \frac{\vert\nabla u\vert^2}{2}(d\psi_n+x\cdot\nabla\psi_n)+\int_{\partial\Omega\setminus\{0\}} \frac{\vert\nabla u\vert^2}{2}(x\cdot\nu)\psi_n+\\
    \int_\Omega \nabla u\cdot\nabla \psi_n (x\cdot\nabla u)- \int_{\partial\Omega\setminus\{0\}} \frac{\partial u}{\partial\nu}(x\cdot\nabla u)\psi_n = -\frac d{p+1}\int_\Omega \vert u\vert^{p+1}\psi_n -\int_\Omega \frac{\vert u\vert^{p+1}}{p+1}(x\cdot\nabla\psi_n).
\end{multline*}
On $\partial\Omega\setminus\{0\}$, $\nabla u = u_\nu\nu$ and $0=r\partial_r u =x\cdot\nabla u$, since $u=0$ on $\partial\Omega$ and $\Omega$ is a cone. Thus, $x\cdot\nabla u=u_\nu(x\cdot\nu)$ and $\frac{\vert\nabla u\vert^2}2(x\cdot\nu)=\frac{u_\nu^2}2(x\cdot\nu)=0$ on $\partial\Omega\setminus\{0\}$ so both boundary integrals are equal to zero! 
It follows that
$$
(1-d/2)\int_\Omega \vert\nabla u \vert^2\psi_n + \frac d{p+1}\int_\Omega \vert u\vert^{p+1}\psi_n =
\int_\Omega \frac{\vert\nabla u\vert^2}{2}(x\cdot\nabla\psi_n) - \int_\Omega \nabla u\cdot\nabla\psi_n(x\cdot\nabla u) -\int_\Omega \frac{\vert u\vert^{p+1}}{p+1}(x\cdot\nabla\psi_n)=:I(n).
$$
For $n\ge1$, the integrands of the right-hand side can be bounded as follows
$$
\left\vert
\frac{\vert\nabla u\vert^2}{2}(x\cdot\nabla\psi_n)-\nabla u\cdot\nabla\psi_n(x\cdot\nabla u) -\frac{\vert u\vert^{p+1}}{p+1}(x\cdot\nabla\psi_n)
\right\vert\le
\left(\frac32\vert\nabla u\vert^2+\frac{\vert u\vert^{p+1}}{p+1}
\right)\vert x\vert\cdot\vert\nabla\psi_n\vert=:g\vert x\vert\cdot\vert\nabla\psi_n\vert\qquad\text{in $\Omega,$} 
$$
where $g\in L^1(\Omega)$ and $g\ge0$. Taking the truncation functions of the form $\theta_n(x)=\theta(x/n)$ and $\zeta_n(x)=\zeta(nx)$ if $d\ge 3$ (and using log-type capacitary truncation functions if $d=2$), we infer that
$$
\int_\Omega g \vert x\vert\cdot\vert\zeta_n\nabla\theta_n\vert \le
\Vert\nabla \theta\Vert_\infty\int_{\Omega\cap[n<\vert x\vert<2n]}g\frac{\vert x\vert}n\le 2\Vert\nabla\theta\Vert_\infty\int_{\Omega\cap[n<\vert x\vert<2n]}g\to0
$$
as $n\to+\infty$ and similarly
$$
\int_\Omega g \vert x\vert\cdot\vert\theta_n\nabla\zeta_n\vert \le 2\Vert\nabla\zeta\Vert_\infty\int_{\Omega\cap[1/n<\vert x\vert<2/n]}g\to0.
$$
Since $\nabla u\in L^2(\Omega)$ and $u\in L^{p+1}(\Omega)$, we may apply the dominated convergence theorem to pass to the limit in $(1-d/2)\int_\Omega\vert\nabla u\vert^2\psi_n-\frac d{p+1}\int_\Omega \vert u\vert^{p+1}\psi_n$ and the proposition follows.
\end{proof}
Next, in order to exploit Poho\v{z}aev's identity, we establish the following elementary identity.
\begin{prop}\label{prop2}
Under the assumptions of Lemma \ref{l2bis}, there holds 
$$
\int_\Omega \vert\nabla u\vert^2 = \int_\Omega \vert u\vert^{p+1}.
$$
\end{prop}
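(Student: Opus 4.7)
The plan is to test the weak formulation \eqref{eqfaibleplus} from Lemma \ref{ls} against $\phi = u\psi_n$, where $\psi_n = \zeta_n\theta_n$ is a product of cutoffs truncating near the vertex $\{0\}$ and near infinity, exactly as in the proof of Proposition \ref{prop1}. Since $\psi_n\in C^1_c(\R^d)$, Lemma \ref{ls} guarantees that $u\psi_n \in H^1_0(\Omega)\cap L^{p+1}(\Omega)$ with bounded support, so $\phi$ is an admissible test function. Computing $\nabla\phi=\psi_n\nabla u+u\nabla\psi_n$, \eqref{eqfaibleplus} produces the identity
$$
\int_\Omega |\nabla u|^2\,\psi_n \;+\; \int_\Omega u\,\nabla u\cdot\nabla\psi_n \;=\; \int_\Omega |u|^{p+1}\,\psi_n.
$$
Lemma \ref{l2bis} asserts $|\nabla u|^2, |u|^{p+1}\in L^1(\Omega)$, so dominated convergence handles the two ``good'' terms: they converge to $\int_\Omega|\nabla u|^2$ and $\int_\Omega|u|^{p+1}$ respectively. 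The entire proof thus reduces to showing that the cross term tends to zero.

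By Cauchy--Schwarz,
$$
\left|\int_\Omega u\,\nabla u\cdot\nabla\psi_n\right|
\;\le\; \Bigl(\int_\Omega |\nabla u|^2\Bigr)^{\!1/2}\Bigl(\int_\Omega u^2|\nabla\psi_n|^2\Bigr)^{\!1/2},
$$
so it suffices to prove $\int_\Omega u^2|\nabla\psi_n|^2\to 0$. For the truncation at infinity with $\theta_n(x)=\theta(x/n)$, apply H\"older's inequality on the annulus $\Omega\cap\{n<|x|<2n\}$:
$$
\int_\Omega u^2|\nabla\theta_n|^2 \;\le\; \frac{\|\nabla\theta\|_\infty^2}{n^2}\,\bigl|\Omega\cap\{n<|x|<2n\}\bigr|^{1-\frac{2}{p+1}}\Bigl(\int_{n<|x|<2n}|u|^{p+1}\Bigr)^{\!\frac{2}{p+1}}.
$$
The annular volume scales as $n^d$, giving a prefactor $n^{d(p-1)/(p+1)-2}$. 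The subcritical/critical assumption $p\le p_S(d)$ is exactly what makes this exponent $\le 0$; in the strictly subcritical case the prefactor itself decays, while at $p=p_S(d)$ the prefactor is $O(1)$ but the tail $\int_{n<|x|<2n}|u|^{p+1}\to0$ by absolute continuity of the integral.

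For the truncation at the origin in dimension $d\geq 3$, take $\zeta_n(x)=\zeta(nx)$; using the boundedness of $u$ on $\overline\Omega\cap B_1$ (from $u\in C(\overline\Omega)$), one gets
$$
\int_\Omega u^2|\nabla\zeta_n|^2 \;\le\; \|u\|_{L^\infty(\Omega\cap B_1)}^2\,n^{2-d}\,\|\nabla\zeta\|_{L^2(\R^d)}^2\;\to\;0.
$$
In dimension $d=2$, this fails and one must instead use the log-capacity truncation near $\{0\}$ mentioned in the proof of Proposition \ref{prop1}, whose Dirichlet energy tends to zero. Combining these two vanishings yields the cross term convergence and hence the identity $\int_\Omega|\nabla u|^2=\int_\Omega|u|^{p+1}$. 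The delicate point I expect is exactly the critical case $p=p_S(d)$, where the scaling argument at infinity gains nothing and convergence must be extracted from the integrability supplied by Lemma \ref{l2bis} alone; the vertex of the cone contributes a second, independent subtlety handled by the capacitary cutoff in $d=2$.
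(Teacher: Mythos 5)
Your proof is correct and follows essentially the same route as the paper's: test the weak formulation \eqref{eqfaibleplus} against $u$ times a cutoff and make the error term vanish using $u\in L^{p+1}(\Omega)$, $\nabla u\in L^2(\Omega)$ and the scaling exponent $d-2\frac{p+1}{p-1}\le 0$ (with the tail $\int_{\Omega\cap[n<|x|<2n]}|u|^{p+1}\to0$ saving the critical case). The only, harmless, differences are that the paper dispenses with the vertex cutoff $\zeta_n$ altogether --- Lemma \ref{ls} already gives $u\theta_n\in H^1_0(\Omega)$ for $\theta_n\in C^1_c(\R^d)$, so your extra work at the origin (and the $d=2$ capacitary cutoff) is not needed here --- and it controls the cross term by one further integration by parts, reducing it to $-\frac12\int_\Omega u^2\Delta\theta_n$ estimated by H\"older, rather than your Cauchy--Schwarz against $\|\nabla u\|_{L^2(\Omega)}$.
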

\begin{proof}
Let $(\theta_n)$ be a standard truncation sequence near infinity and apply \eqref{eqfaibleplus} with $\psi=\theta_n$ to get
$$
\int_\Omega\nabla u\cdot\nabla (u\theta_n)= \int_\Omega \vert u\vert^{p+1}\theta_n.
$$
The left-hand side equals
$$
\int_\Omega \vert\nabla u\vert^2\theta_n + \int_\Omega \nabla\frac{u^2}2\cdot\nabla\theta_n = \int_\Omega \vert\nabla u\vert^2\theta_n - \int_\Omega \frac{u^2}2\cdot\Delta\theta_n , 
$$
where we used that $u=0$ on $\partial\Omega$ and $\nabla\theta_n=0$ in $B_1 \cup \R^d \setminus B_{2n}$. 
In addition,
$$
\left\vert\int_\Omega {u^2}\Delta\theta_n\right\vert \le
\left(\int_{\Omega\cap[n<\vert x\vert<2n]}\vert u\vert^{p+1}\right)^{\frac2{p+1}}\left(\int_{\Omega\cap[n<\vert x\vert<2n]}\vert\Delta\theta_n\vert^{\frac{p+1}{p-1}}\right)^{\frac{p-1}{p+1}},
$$
and since $p\le p_S(d)$,
$$
\int_{\Omega\cap[n<\vert x\vert<2n]}\vert\Delta\theta_n\vert^{\frac{p+1}{p-1}} \le C n^{d-2\frac{p+1}{p-1}}\to0,
$$
as $n\to\infty$.
Since $u\in L^{p+1}(\Omega)$ and $\nabla u \in L^2(\Omega)$, we may pass to the limit and obtain the desired conclusion.
\end{proof}
\begin{remark}\label{rem1}
Combining Propositions \ref{prop1} and \ref{prop2}, we immediately conclude that $u=0$ if $p<p_S(d)$.
\end{remark}
To deal with the critical case $p=p_S(d)$, we prove the following
\begin{prop}\label{prop3} Let $p=p_S(d)$ and $u\in C^2(\Omega)\cap C(\overline\Omega)$ be a classical solution of \eqref{main} in a cone $\Omega$ such that $\partial\Omega$ is a graph in the $x_d$ direction. Then, under the assumptions of Lemma \ref{l2bis}, there holds
$$
\int_{\partial\Omega\setminus\{0\}}\nu_d\vert\nabla u \vert^2 =0,
$$
where $\nu_d$ is the $d$-th coordinate of the outward unit normal vector $\nu$.
\end{prop}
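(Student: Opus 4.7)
The plan is a Pohozaev-style calculation with the translation multiplier $\partial_d u$, chosen so that the boundary contributions assemble into the desired $\nu_d$-weighted integral of $|\nabla u|^2$. I would multiply \eqref{main} by $\psi_n \partial_d u$, where $\psi_n = \zeta_n \theta_n$ is the double cutoff from the proof of Proposition \ref{prop1}, and integrate over $\Omega$. On the right-hand side, $|u|^{p-1} u\, \partial_d u = \partial_d(|u|^{p+1}/(p+1))$ and its boundary integral vanishes since $u=0$ on $\partial\Omega$.

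On the left-hand side, using $\nabla u \cdot \nabla(\partial_d u) = \partial_d(|\nabla u|^2/2)$ and integrating by parts twice produces two boundary integrals on $\partial\Omega \setminus \{0\}$. These simplify via the identities $\nabla u = u_\nu \nu$, $\partial_d u = u_\nu \nu_d$, and $\partial_\nu u\, \partial_d u = |\nabla u|^2 \nu_d$ (all forced by $u=0$ on $\partial\Omega \setminus \{0\}$) and combine into $-\tfrac{1}{2} \int_{\partial\Omega \setminus \{0\}} |\nabla u|^2 \nu_d\, \psi_n$. Collecting, one arrives at
$$\tfrac{1}{2} \int_{\partial\Omega \setminus \{0\}} |\nabla u|^2 \nu_d\, \psi_n = \int_\Omega \Bigl( -\tfrac{|\nabla u|^2}{2} \partial_d \psi_n + (\partial_d u)(\nabla u \cdot \nabla \psi_n) + \tfrac{|u|^{p+1}}{p+1} \partial_d \psi_n \Bigr).$$

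To send $\psi_n \to 1$, I bound the right-hand side by $C \int_\Omega (|\nabla u|^2 + |u|^{p+1}) |\nabla \psi_n|$. The far-field contribution from $\theta_n$ vanishes by the global integrability $|\nabla u|^2, |u|^{p+1} \in L^1(\Omega)$ provided by Lemma \ref{l2bis}.

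The main obstacle is the near-vertex contribution from $\zeta_n$, where $|\nabla \zeta_n| \sim n$ on the shell $\{1/n < |x| < 2/n\}$; the idea is to apply the quasi-$H^1$ estimate \eqref{quasiH1} with a local cutoff supported on this shell to get
$$\int_{\Omega \cap \{1/n < |x| < 2/n\}} |\nabla u|^2 \le C M_n^{p+1} n^{-d} + C M_n^2\, n^{2-d},$$
where $M_n := \|u\|_{L^\infty(\Omega \cap B_{4/n})} \to 0$ by continuity of $u$ on $\overline{\Omega}$ and $u(0) = 0$. Multiplying by $n$ gives $C M_n^{p+1} n^{1-d} + C M_n^2\, n^{3-d}$, which tends to zero using $d \ge 3$ at criticality $p = p_S(d)$ together with $M_n \to 0$. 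The $|u|^{p+1}$ contribution is controlled even more directly by the $L^\infty$ bound on the shell. This closes the limit procedure and yields the claimed identity.
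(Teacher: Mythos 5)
Your computation follows the paper's proof almost verbatim: the same multiplier $\frac{\partial u}{\partial x_d}\psi_n$, the same two integrations by parts, the same boundary identities $\nabla u=u_\nu\nu$, $\frac{\partial u}{\partial x_d}=u_\nu\nu_d$, $\frac{\partial u}{\partial\nu}\frac{\partial u}{\partial x_d}=\vert\nabla u\vert^2\nu_d$, and the same resulting identity equating $\frac12\int_{\partial\Omega\setminus\{0\}}\vert\nabla u\vert^2\nu_d\psi_n$ to the three solid integrals. Your treatment of the near-vertex cutoff is in fact more explicit than the paper's: the paper defers to the proof of Proposition \ref{prop1}, but there the integrand carried the extra weight $\vert x\vert$, which exactly compensates $\vert\nabla\zeta_n\vert\sim n$ on the shell $\{1/n<\vert x\vert<2/n\}$; that weight is absent here, so additional input is needed, and your use of \eqref{quasiH1} localized to the shell together with $\|u\|_{L^\infty(\Omega\cap B_{4/n})}\to\vert u(0)\vert=0$ (and $d\ge 3$, automatic at $p=p_S(d)$) supplies it correctly.

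There is, however, one genuine omission at the very end. Your limit procedure proves that $\int_{\partial\Omega\setminus\{0\}}\nu_d\vert\nabla u\vert^2\psi_n\to 0$, whereas the proposition asserts $\int_{\partial\Omega\setminus\{0\}}\nu_d\vert\nabla u\vert^2=0$, and passing from the former to the latter is not automatic: there is no a priori integrability of $\vert\nabla u\vert^2$ on the unbounded hypersurface $\partial\Omega\setminus\{0\}$, so dominated convergence is unavailable, and without a sign condition the convergence of $\int f\psi_n$ says nothing about $\int f$. This is precisely where the hypothesis that $\partial\Omega$ is a graph in the $x_d$ direction must enter: it forces $-\nu_d\ge 0$ on $\partial\Omega\setminus\{0\}$, so that $(-\nu_d)\vert\nabla u\vert^2\psi_n$ is a nonnegative, nondecreasing sequence and Beppo Levi's theorem upgrades the convergence to the stated identity. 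Your write-up never invokes the graph hypothesis, which is the telltale sign that this step is missing; once you add it, the proof is complete.
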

\begin{proof}
We multiply \eqref{main} with the test function $\frac{\partial u}{\partial x_d}\psi_n$ (where $\psi_n$ has been defined at the 
beginning of the proof of Proposition \ref{prop1}) and integrate by parts. The left-hand side equals
$$
-\int_\Omega \Delta u\left(\frac{\partial u}{\partial x_d}\psi_n\right) = \int_\Omega \nabla u\cdot\nabla\left(\frac{\partial u}{\partial x_d}\psi_n\right) -\int_{\partial\Omega\setminus\{0\}}\frac{\partial u}{\partial \nu}\frac{\partial u}{\partial x_d}\psi_n
$$
while the right-hand side equals
$$
\int_\Omega \vert u\vert^{p-1}u\frac{\partial u}{\partial x_d}\psi_n = 
\int_\Omega \frac{\partial}{\partial x_d}\left(\frac{\vert u\vert^{p+1}}{p+1}\right)\psi_n=
-\int_\Omega \frac{\vert u\vert^{p+1}}{p+1}\frac{\partial\psi_n}{\partial x_d},
$$
where we used the fact that $u=0$ on $\partial\Omega\setminus\{0\}$. Hence,
$$
\int_\Omega \nabla u\cdot\nabla\left(\frac{\partial u}{\partial x_d}\right)\psi_n + \int_\Omega \nabla u\cdot\nabla\psi_n \frac{\partial u}{\partial x_d} - \int_{\partial\Omega\setminus\{0\}}\frac{\partial u}{\partial\nu}\frac{\partial u}{\partial x_d}\psi_n=-\int_\Omega \frac{\vert u\vert^{p+1}}{p+1}\frac{\partial\psi_n}{\partial x_d}.
$$
Since $\nabla u\cdot\nabla\left(\frac{\partial u}{\partial x_d}\right)=\frac{\partial}{\partial x_d}\frac{\vert\nabla u\vert^2}2$, we integrate by parts the first term and find
$$
-\int_\Omega  \frac{\vert \nabla u\vert^2}2\frac{\partial \psi_n}{\partial x_d} +\int_{\partial\Omega\setminus\{0\}} \frac{\vert\nabla u\vert^2}{2}\psi_n\nu_d+ \int_\Omega \nabla u\cdot\nabla\psi_n \frac{\partial u}{\partial x_d} - \int_{\partial\Omega\setminus\{0\}}\frac{\partial u}{\partial\nu}\frac{\partial u}{\partial x_d}\psi_n=-\int_\Omega \frac{\vert u\vert^{p+1}}{p+1}\frac{\partial\psi_n}{\partial x_d}.
$$
Since $\nabla u =u_\nu\nu$ on $\partial\Omega\setminus\{0\}$, $\frac{\partial u}{\partial x_d} = u_\nu\nu_d$ on $\partial\Omega\setminus\{0\}$ and so 
$\frac{\partial u}{\partial \nu}\frac{\partial u}{\partial x_d}= u_\nu^2\nu_d=\vert\nabla u\vert^2\nu_d$ on $\partial\Omega\setminus\{0\}$. We deduce that
$$
I(n):=-\int_\Omega \frac{\vert\nabla u\vert^2}2\frac{\partial\psi_n}{\partial x_d}+\int_\Omega \nabla u\cdot\nabla\psi_n\frac{\partial u}{\partial x_d}+
\int_\Omega \frac{\vert u\vert^{p+1}}{p+1}\frac{\partial\psi_n}{\partial x_d} = \frac12\int_{\partial\Omega\setminus\{0\}}\vert\nabla u\vert^2\psi_n\nu_d.
$$
By the graph property of $\partial\Omega$, $-\nu_d\ge0$. So, in order to conclude, it suffices to prove that the left-hand side in the above identity converges to $0$ as $n\to+\infty$ (using Beppo Levi's theorem on $\partial\Omega\setminus\{0\}$). But
$$
\vert I(n)\vert\le \int_\Omega \left(\frac32\vert\nabla u\vert^2+\frac{\vert u\vert^{p+1}}{p+1}\right)\vert\nabla\psi_n\vert\to 0,
$$
as $n\to+\infty$, as in the proof of Proposition \ref{prop1}.
\end{proof}

\begin{remark}\label{rem2}
 If $\partial\Omega$ is a graph, then there exists $p\in\partial\Omega\setminus\{0\}$ such that $\nu_d(p)\neq0$ and so there exists $\rho\in(0,1)$ such that $\vert\nabla u\vert^2=0$ in $(\partial\Omega\setminus\{0\})\cap B(p,\rho)$. By the unique continuation principle, $u=0$ in $\Omega$.
\end{remark}
Going back to the statement of Theorem \ref{To prove:SOAC2}, our cone $\Omega$ is star-shaped with respect to the north pole, so thanks to Lemma \ref{geometric1} below, the boundary of the cone $\Omega$ is a graph in the $x_d$ direction. Then we may apply Proposition \ref{prop3} and by Remarks \ref{rem1} and \ref{rem2}, we have just proved Theorems \ref{To prove:SOAC}, \ref{To prove:SOAC2} and Corollary \ref{lastcor} in the case $1<p\le p_S(d)$. Corollary \ref{cor:SOAC} also follows for $1<p\le p_S(d)$ since in that case and for $\Omega=\R^d_+$, any weak solution is in fact a classical solution $u\in C^2(\super{\R^d_+})$.
\begin{remark}Regarding weak solutions, we observe that Lemma \ref{l2bis} remains valid for weak solutions stable outside a compact set.
Indeed, in these cases, \eqref{eqfaibleplus} holds and so also does Lemma \ref{l2bis}.

\end{remark}
We turn next to 
\subsection{The supercritical case $p>p_S(d)$}

\

Observe that if $u$ solves \eqref{main} and $\lambda>0$, then the function $u_\lambda$ defined by
\begin{equation}\label{rescaled solutions}
    u_\lambda(x):=\lambda^{\frac{2}{p-1}}u(\lambda\,x),  \qquad x\in \Omega
\end{equation}
is also a solution to \eqref{main}. In order to understand the asymptotic profile of $u$ at infinity, we consider the blow-down family $(u_\lambda)$ as $\lambda\to+\infty$.

\subsubsection{\textit{\textbf{A priori}} estimates and convergence of the blow-down family}

\begin{lemma}\label{energy} Let $p\ge p_S(d)$. Assume that $u$ is a weak solution of \eqref{main} stable outside a compact set. Then, there exist constants $C=C(u,d,p)> 0 $ 
such that for all 
$R>1$ and $\lambda\ge 1$,
\begin{equation}\label{bound2}
      \int_{\Omega_R} \vert\nabla u_\lambda\vert^2+\vert u_\lambda\vert^{p+1}\leq CR^{d-2\frac{p+1}{p-1}}.
\end{equation}
\end{lemma}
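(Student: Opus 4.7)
The plan is to exploit the scale-invariance of the energy $\int_{\Omega_R}(|\nabla u|^2+|u|^{p+1})$ under the rescaling \eqref{rescaled solutions}. A direct change of variable $y=\lambda x$ gives
$$
\int_{\Omega_R}\bigl(|\nabla u_\lambda|^2+|u_\lambda|^{p+1}\bigr)\,dx
= \lambda^{\,2(p+1)/(p-1)-d}\int_{\Omega_{\lambda R}}\bigl(|\nabla u|^2+|u|^{p+1}\bigr)\,dy,
$$
so the lemma will follow once I establish the $\lambda$-free estimate
$$
\int_{\Omega_\rho}\bigl(|\nabla u|^2+|u|^{p+1}\bigr)\le C\,\rho^{\,d-2(p+1)/(p-1)} \qquad\text{for every }\rho>1,
$$
and apply it with $\rho=\lambda R>1$: the powers of $\lambda$ then cancel identically.

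To prove this $\rho$-estimate I would revisit the proof of Lemma \ref{l2bis}, but, instead of passing to the limit $\rho\to+\infty$, I would keep careful track of how each bound depends on $\rho$. Fix $R_0$ with $K\subset\Omega_{R_0}$ and set $m=(p+1)/(p-1)$. Repeating verbatim the derivation of \eqref{Lp+1 control tf} together with the analogous one starting from \eqref{stable result2} yields, for every nonnegative cutoff $\psi\in C^1_c(\R^d)$ supported away from $K$,
$$
\int_\Omega |u|^{p+1}\psi^{2m}+\int_\Omega|\nabla(u\psi^m)|^2\,\le\, C\int|\nabla\psi|^{2m}.
$$

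For $\rho>4R_0$ I would then take $\psi$ equal to $1$ on $B_\rho\setminus B_{2R_0}$, vanishing on $B_{R_0}\cup(\R^d\setminus B_{2\rho})$, with $|\nabla\psi|\le C/R_0$ on the inner annulus and $|\nabla\psi|\le C/\rho$ on the outer one. This produces $\int|\nabla\psi|^{2m}\le C(R_0^{d-2m}+\rho^{d-2m})$; the assumption $p\ge p_S(d)$ is precisely $d-2m\ge 0$, which makes the outer term dominate and simplifies the bound to $C\rho^{d-2m}$. Since $\psi\equiv 1$ on $B_\rho\setminus B_{2R_0}$, this controls the energy on the annular region, while the interior contribution $\int_{\Omega_{2R_0}}(|u|^{p+1}+|\nabla u|^2)$ is a finite constant by Definition \ref{weak definition 2} that is absorbed into $C\rho^{d-2m}$ using $\rho^{d-2m}\ge 1$; the range $1<\rho\le 4R_0$ is trivial. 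I do not expect a substantial obstacle here: the entire argument stays at the level of the weak formulation \eqref{eqfaibleplus} provided by Lemma \ref{ls}, so no extra regularity is required beyond that of a weak solution, and the hypothesis $p\ge p_S(d)$ enters only to ensure that the outer annular term controls the inner one. At the critical value $p=p_S(d)$ the estimate degenerates to a uniform bound on the full energy of $u_\lambda$, which is consistent with the $L^{p+1}\cap H^1$ bound already obtained in Lemma \ref{l2bis}.
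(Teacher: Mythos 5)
Your proof is correct and follows essentially the same route as the paper: both rest on the Farina-type stability estimates from the proof of Lemma \ref{l2bis} with annular cutoffs at scales $R_0$ and $\rho$, use $p\ge p_S(d)\iff d-2\frac{p+1}{p-1}\ge 0$ to let the outer term dominate, and absorb the fixed interior energy near the vertex using $\lambda\ge 1$. The only (cosmetic) difference is that you change variables first and estimate $u$ on $\Omega_{\lambda R}$, whereas the paper estimates $u_\lambda$ directly, noting it is stable outside $\Omega_{R_0/\lambda}$; the two computations are identical after rescaling.
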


This follows essentially from \cite{Farina}. For completeness, we include a proof below.

\begin{proof}
Fix $R_0 = R_0(u) >1$ such that $u$ is stable outside of $\Omega_{R_0}$.
Then, given $\lambda\ge 1$, $u_\lambda$ is stable outside the smaller domain $\Omega_{R_0/\lambda}$. Therefore, by proceeding as in the proof of Lemma \ref{l2bis}, we obtain inequalities \eqref{bound0} and \eqref{bound1} for $u_\lambda$. Hence, for $R >2R_0/\lambda$,  
$$
\int_{\Omega_R \setminus \Omega_{2R_0/\lambda}} \,\vert\nabla u_\lambda\vert^2+\vert u_\lambda\vert^{p+1} \le 
C_{p,d} \big ((R_0/\lambda)^{d-2\frac{p+1}{p-1}}+R^{d-2\frac{p+1}{p-1}} \big) \le 2 C_{p,d} R^{d-2\frac{p+1}{p-1}} 
$$
where we have used that $p \ge p_S(d)$ implies $d-2\frac{p+1}{p-1}\ge 0$.

On the other hand we also have  
\begin{equation}\label{bound4}
\int_{\Omega_{2R_0/\lambda}}\,\vert\nabla u_\lambda\vert^{2}+u_\lambda^{p+1} = \lambda^{2\frac{p+1}{p-1}-d}\int_{\Omega_{2R_0}}\,\vert\nabla u\vert^{2}+\vert u\vert^{p+1}\le \int_{\Omega_{2R_0}}\,\vert\nabla u\vert^{2}+\vert u\vert^{p+1} = C(u,p,R_0(u)) < +\infty
\end{equation}
since $\lambda\ge 1$. The lemma follows.\end{proof}

\begin{remark}\label{optimal energy}
    Let $p>p_S(d)$. Assume that $u$ is a weak solution of \eqref{main} stable outside a compact set. Then, the following improved version of \eqref{bound2} holds for some $\gamma>1$ and all $ R>1$
    \begin{equation}\label{bound3}
         \int_{\Omega_R} \vert \nabla (\vert u_\lambda \vert^{\frac{\gamma-1}{2}} u_\lambda) \vert^2+\vert u_\lambda\vert^{p+\gamma}\leq C(1+R^{d-2\frac{p+\gamma}{p-1}}),
    \end{equation}
    where $C$ is constant depending only on $ p,d, \gamma$ and $u$.
    
    Precisely, as in Proposition 4 of \cite{Farina}, testing equation \eqref{main} with\footnote{For any $\beta>0$, the function $\vert T_k(u)\vert^{\beta} T_k(u) \varphi^2$ belongs to $H^1_0(\Omega) \cap L^{p+1}(\Omega)$ and has bounded support. Indeed, pick $\xi \in C^1_c(\R^d)$ such that $\xi \equiv 1 $ on a neighbourhood of the support of $\varphi$, then $u \xi \in H^1_0(\Omega)$. The latter implies $\vert T_k(u \xi)\vert^{\beta} T_k(u \xi) \in H^1_0(\Omega)$ and so also $\vert T_k(u \xi)\vert^{\beta} T_k(u \xi) \varphi^2$ belongs to $H^1_0(\Omega)$. Then observe that $\vert T_k(u \xi)\vert^{\beta} T_k(u \xi) \varphi^2 = \vert T_k(u)\vert^{\beta} T_k(u) \varphi^2$ on $\Omega$. Since it is clear that $\vert T_k(u)\vert^{\beta} T_k(u) \varphi^2$ belongs to $ L^{p+1}(\Omega)$ and has bounded support, the desired claim follows.

    } $\vert T_k(u)\vert^{\gamma-1} T_k(u) \varphi^2$, where $T_k(u)$ is the truncation of $u$ at level $k>0$, $\gamma\in(1,2p+2\sqrt{p(p-1)}-1)$ and $\varphi \in C^1_c(\R^d)$, and then inserting $\vert T_k(u)\vert^{\frac{\gamma-1}{2}} T_k(u) \varphi^2$ in the stability condition, we are lead to bounds \eqref{bound0} and \eqref{bound1}, where the exponent $d-2\frac{p+1}{p-1}$ is replaced by $d-2\frac{p+\gamma}{p-1}$. Finally, choosing $\gamma$ such that $d-2\frac{p+\gamma}{p-1} \ge 0$ (and filling the hole) leads to the desired estimate \eqref{bound3}.
\end{remark}

We use the preceding remark to prove convergence of $(u_\lambda)$ to a limit $u_\infty$ in suitable spaces.

\begin{lemma}\label{convergence}
    Let $p>p_S(d)$. Assume that $u$ is a weak solution of \eqref{main} stable outside a compact set. The rescaled solutions converge along a sequence $(u_{\lambda_n})$ to a limit $u_\infty$ strongly 
    in $H^1\cap\,L^{p+1}(\Omega_R)$ for every $R>0$, as $\lambda_n\to\infty$. Furthermore, $u_\infty$ is a weak, stable solution of \eqref{main}. 
\end{lemma}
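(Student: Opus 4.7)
The plan is to combine the uniform bounds of Lemma \ref{energy} and Remark \ref{optimal energy} with standard compactness arguments. For every $R>0$ and every $\lambda\ge 1$, the family $(u_\lambda)$ is uniformly bounded in $H^1(\Omega_R)\cap L^{p+\gamma}(\Omega_R)$ for some $\gamma>1$. A diagonal extraction over an exhausting sequence $R_m\to\infty$ will yield a subsequence $(u_{\lambda_n})$ converging to some $u_\infty$ weakly in $H^1(\Omega_R)\cap L^{p+\gamma}(\Omega_R)$ for every $R>0$; the zero boundary condition is preserved in the trace limit.

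The next and crucial step is to upgrade this to strong $L^{p+1}_{\mathrm{loc}}$ convergence. By Rellich--Kondrachov the convergence is already strong in $L^2(\Omega_R)$, and interpolation against the $L^{p+\gamma}$ bound (valid since $p+\gamma>p+1$) will promote this to strong convergence in $L^{p+1}(\Omega_R)$ for every $R>0$. This is precisely where the strict improvement $\gamma>1$ in Remark \ref{optimal energy} is essential, and I expect this to be the main obstacle in terms of what drives the whole argument. Once it is in hand, passing to the limit in the distributional equation becomes routine: tested against $\phi\in C^\infty_c(\Omega)$, the linear term converges by weak $H^1$ convergence and the nonlinear term by strong $L^{p+1}$ convergence (which yields $|u_{\lambda_n}|^{p-1}u_{\lambda_n}\to|u_\infty|^{p-1}u_\infty$ in $L^{(p+1)/p}(\Omega_R)$). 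The remaining requirement $u_\infty\psi\in H^1_0(\Omega)$ for $\psi\in C^1_c(\R^d)$ follows from the corresponding property of each $u_{\lambda_n}$ and the weak closedness of $H^1_0(\Omega)$ in $H^1(\Omega)$, so $u_\infty$ will be a weak solution in the sense of Definition \ref{weak definition 2}.

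For strong $H^1$ convergence, I plan to test \eqref{eqfaibleplus} against the admissible function $u_{\lambda_n}\varphi^2$ with $\varphi\in C^1_c(\R^d)$, yielding
$$
\int|\nabla u_{\lambda_n}|^2\varphi^2=\int|u_{\lambda_n}|^{p+1}\varphi^2-2\int u_{\lambda_n}\varphi\,\nabla u_{\lambda_n}\cdot\nabla\varphi.
$$
The right-hand side converges to the analogous expression for $u_\infty$ (the first term by strong $L^{p+1}$ convergence, the second by pairing the strong $L^2$ convergence of $u_{\lambda_n}\nabla\varphi$ with the weak $L^2$ convergence of $\nabla u_{\lambda_n}\varphi$), and by the weak-solution identity for $u_\infty$ this limit equals $\int|\nabla u_\infty|^2\varphi^2$. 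Thus the localized norms $\|\nabla u_{\lambda_n}\varphi\|_{L^2}$ converge to $\|\nabla u_\infty\varphi\|_{L^2}$, and paired with weak $H^1$ convergence this classically upgrades to strong $H^1(\Omega_R)$ convergence for every $R>0$.

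Finally, for stability, fix $\varphi\in C^1_c(\Omega)$. Its support is a compact subset of the open set $\Omega$ and is therefore bounded away from the vertex $\{0\}$, while the rescaled compact set $K/\lambda_n$ shrinks to $\{0\}$; hence for $n$ large enough $\mathrm{supp}(\varphi)\cap(K/\lambda_n)=\emptyset$, and $u_{\lambda_n}$ satisfies \eqref{def stable} against $\varphi$. Passing to the limit, using that strong $L^{p+1}$ convergence of $u_{\lambda_n}$ implies $|u_{\lambda_n}|^{p-1}\to|u_\infty|^{p-1}$ in $L^{(p+1)/(p-1)}(\mathrm{supp}(\varphi))$, will give \eqref{def stable} for $u_\infty$ against the same $\varphi$, establishing stability on all of $\Omega$.
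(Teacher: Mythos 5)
Your proposal is correct and follows essentially the same route as the paper's proof: uniform $H^1\cap L^{p+\gamma}$ bounds plus diagonal extraction, Rellich--Kondrachov and interpolation for strong $L^{p+1}_{\mathrm{loc}}$ convergence, weak closedness of $H^1_0$ for the boundary condition, the energy identity obtained by testing with $u_{\lambda_n}\varphi^2$ to upgrade weak to strong $H^1$ convergence via norm convergence, and the shrinking of $K/\lambda_n$ to the vertex for stability. The only cosmetic differences are that the paper passes to the limit in the nonlinear term via pointwise a.e.\ convergence with an $L^{p+1}$ dominating function rather than Nemytskii continuity, and works with $\|\nabla(u_{\lambda_n}\varphi)\|_{L^2}$ rather than $\|\varphi\nabla u_{\lambda_n}\|_{L^2}$; both are equivalent.
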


\begin{proof} 
By estimate \eqref{bound2} and \eqref{bound3} there is $\gamma>1$ such that $(u_\lambda)$ is bounded in $H^1 \cap\,L^{p+\gamma}(\Omega_R)$ for every $R>0$. 
Thus, by a standard diagonal argument, there exists a measurable function $u_\infty$ defined on $\Omega$ and a subsequence $(u_{\lambda_n})$ weakly converging to $u_\infty$ in $H^1\cap\,L^{p+\gamma}(\Omega_R)$ for every $R>0$. In particular we get 
\begin{equation}\label{cvfaible1}
       \int_{\Omega}\nabla\,u_{\lambda_n}\nabla \eta \to
       \int_{\Omega}\nabla\,u_\infty\nabla\eta,
\end{equation}    
for all $\eta \in \, H^1_0(\Omega)$ with bounded support.

We also observe that, by Rellich-Kondrachov's compactness theorem, we may and do suppose that the convergence is strong in $L^2(\Omega_R)$, for every $R>0$.\footnote{Indeed, let $(\phi_k)$ be a sequence of standard cut-off functions of class $C^1_c(\R^d)$. Then, for every fixed $k$, the sequence $(u_{\lambda_n} \phi_k)$ is bounded in $H^1_0 (\Omega_{3k})$ and by Rellich-Kondrachov's compactness theorem, it is compact in $L^2(\Omega_{3k})$ which, in turn, implies that the sequence $(u_{\lambda_n})$ is compact in $L^2(\Omega_{k})$. The desired conclusion then follows by a standard diagonal argument.}
To obtain strong convergence in $L^{p+1}(\Omega_R)$, we apply the following interpolation inequality 
    \begin{equation*}
       \norm{u_\lambda-u_\infty}_{L^{p+1}(\Omega_R)}\leq\norm{u_\lambda-u_\infty}^\theta_{L^{2}(\Omega_R)}\norm{u_\lambda-u_\infty}^{1-\theta}_{L^{p+\gamma}(\Omega_R)},
    \end{equation*} 
where $\theta=\frac{2\gamma-2}{(p+\gamma-2)(p+1)}$, as follows from H\"{o}lder's inequality. Since $(u_{\lambda_n})$ converges to $u_\infty$ in $L^2(\Omega_R)$ and $(u_{\lambda_n})$ is bounded in $L^{p+\gamma}(\Omega_R)$, it follows that $(u_{\lambda_n})$ converges strongly to $u_\infty$ in $L^{p+1}(\Omega_R)$ for every $R>0$. Therefore, we can find a subsequence (still denoted by $(u_{\lambda_n}$)) and $g \in L^{p+1}(\Omega_R)$ for every $R>0$ such that 
\begin{equation}\label{pointwisecvplus}
\begin{split}
       &\vert u_{\lambda_n} \vert \leq g \qquad a.e. \,\, in \,\, \Omega,\\
       & u_{\lambda_n}\to u_\infty \qquad a.e. \,\, in \,\, \Omega.
    \end{split}
\end{equation}    
The latter and Lebesgue's dominated convergence theorem imply that 
\begin{equation}\label{cvfaible2}
        \int_{\Omega}\,|u_{\lambda_n}|^{p-1}u_{\lambda_n}\eta \to \int_{\Omega}|u_\infty|^{p-1}u_\infty\eta,
    \end{equation}
for all $\eta \in \, L^{p+1}(\Omega)$ with bounded support. 

Recalling that $u_{\lambda_n}$ is a weak solution to \eqref{main}, we see that the combination of \eqref{cvfaible2} and \eqref{cvfaible1} gives

\begin{equation}\label{weak-eq-infty}
        \int_{\Omega}\nabla\,u_\infty\nabla\eta =\int_{\Omega}|u_\infty|^{p-1}u_\infty\eta,
\end{equation}
for all $\eta \in H^1_0(\Omega) \cap L^{p+1}(\Omega)$ with bounded support.


Next we observe that $u_{\infty} \zeta\in H^1(\Omega)$ for every $\zeta\in C^1_c(\R^d)$ and that $(u_{\lambda_n} \zeta)$ converges weakly to $u_\infty \zeta$ in $H^1(\Omega)$, thanks to the weak convergence of $(u_{\lambda_n})$ in $H^1(\Omega_R)$ for every $R>0$. Hence, $u_{\infty} \zeta\in H^1_0(\Omega)$ for every $\zeta\in C^1_c(\R^d)$, since $H^1_0(\Omega)$ is weakly closed in $H^1(\Omega)$.  
The latter and \eqref{weak-eq-infty} prove that $u_{\infty}$ is a weak solution to \eqref{main} and 
\begin{equation}\label{eqfaibleplus-infty}
      \int_{\Omega}\nabla u_{\infty} \nabla (u_{\infty}\varphi^2) =  \int_{\Omega} \vert u_{\infty} \vert^{p+1} \varphi^2 \qquad \forall \, \varphi \in C^1_c(\R^d)
\end{equation}
since $\,u_{\infty} \varphi^2$ belongs to $H^1_0(\Omega) \cap\,L^{p+1}(\Omega)$ and has bounded support. 

 Next, we claim that $u_{\lambda_n}\varphi \to u_\infty \varphi$ in $H^1_0(\Omega)$. In view of \eqref{eqfaibleplus-infty} and 
 \eqref{weak sol} we can compute as in the first line of \eqref{weak sol2} and find that, for every $\varphi \in C^1_c(\R^d)$ 
    \begin{align*}
        \int_{\Omega}
        |\nabla(u_{\lambda_n}\varphi)|^2&=\int_{\Omega} 
        |u_{\lambda_n}|^2|\nabla\varphi|^2+\int_{\Omega}\,
        |u_{\lambda_n}|^{p+1}\varphi^2
        \to\int_{\Omega} 
        |u_\infty|^2|\nabla\varphi|^2+\int_{\Omega}\,
        |u_\infty|^{p+1}\varphi^2
        =\int_{\Omega}
        |\nabla(u_\infty\varphi)|^2,
    \end{align*}
where in the latter we have used \eqref{pointwisecvplus} and 
Lebesgue's dominated convergence theorem. It follows that 
$(u_{\lambda_n}\varphi)$ converges to $u_\infty \varphi$ in $H^1_0(\Omega)$, for every $\varphi \in C^1_c(\R^d)$ and, in particular $u_{\lambda_n} \to u_\infty$ in $H^1(\Omega_R)$ for every $R>0$.
Stability of the weak solution $u_\infty$ follows from the strong convergence of $(u_{\lambda_n})$ in $L^{p+1}(\Omega_R)$ 
and the fact that for fixed $n$, $u_{\lambda_n}$ is stable outside $K/\lambda_n$ (here $K \subset \Omega$ is any compact set such that $u$ is stable outside $K$).
\end{proof}

\begin{remark}
    One may wonder if Lemma \ref{convergence} still holds if $p=p_S(d)$. If one works in all of $\R^d$, the answer is no. Indeed, a constant multiple of the function $u(x)=(1+\vert x\vert^2)^{-\frac{d-2}2}$ solves the equation and is stable outside a compact set, thanks to Hardy's inequality. Yet, the family $(u_\lambda)$ is clearly not compact in $L^{p+1}(B_1)$.
\end{remark}

\subsubsection{Monotonicity formula}
Define the functional 
\begin{equation}\label{E}
E(u;\lambda) = 
\lambda^{2\frac{p+1}{p-1}-d}\int_{\Omega_\lambda} \left(\frac12\vert\nabla u\vert^2 - \frac{1}{p+1}\vert  u\vert^{p+1}\right)\;dx
+ \lambda^{\frac{4}{p-1}+1-d}\frac{1}{p-1}\int_{\Omega\cap\partial B_{\lambda}} u^2\;d\sigma,
\end{equation}
where $\lambda>0$, $\Omega_{\lambda}=\Omega\cap B_{\lambda}$ and $u\in C^2(\Omega)\cap C(\overline\Omega)$ is a classical solution to \eqref{main}. 
\begin{remark}
Note that $E$ satisfies the following simple scaling relation: given $\lambda,R>0$, 
$$E(u;\lambda R)=E(u_{\lambda};R)$$
\end{remark}
The main result of this section is the following monotonicity formula, which extends a result due to Pacard \cite{p}. 
\begin{lemma}\label{monotonicity formula}
   Let $u\in C^2(\Omega)\cap C(\super\Omega)$ be a solution of \eqref{main} 
   and $\lambda>0$. Let $E$ be as in \eqref{E}. Then $\lambda\mapsto E(u;\lambda)$ is a non-decreasing function. Furthermore, $E(u;\cdot)\in C^1(\R_+^*)$ and for all $\lambda>0$,
\begin{equation}\label{derivative of E}
\frac{d E}{d\lambda}(u;\lambda) = \lambda^{\frac{4}{p-1}+2-d}\int_{\Omega\cap\partial B_\lambda}\left({\partial_r  u}+\frac{2}{p-1}\frac { u}r\right)^2\;d\sigma.
\end{equation}
\end{lemma}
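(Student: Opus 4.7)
The plan hinges on the scaling identity $E(u;\lambda)=E(u_\lambda;1)$ (recorded in the remark just before the lemma), which follows by changing variables $y=\lambda x$ in each integral defining $E$. Consequently $\frac{dE}{d\lambda}(u;\lambda)=\frac{d}{dR}\big|_{R=\lambda}E(u_R;1)$, and I would study
$$F(R):=\int_{\Omega_1}\left(\frac12|\nabla u_R|^2-\frac{|u_R|^{p+1}}{p+1}\right)dx+\frac{1}{p-1}\int_{\Omega\cap\partial B_1}u_R^2\,d\sigma.$$
The crucial geometric fact is that $\Omega$ is a cone, so $u_R(x)=R^{2/(p-1)}u(Rx)$ vanishes on $\partial\Omega\setminus\{0\}$ for every $R>0$; in particular, differentiating in $R$, the parameter derivative $\partial_R u_R$ also vanishes there.

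Differentiating $F$ under the integral sign, integrating the resulting $\nabla u_R\cdot\nabla(\partial_R u_R)$ term by parts, and using the equation $-\Delta u_R=|u_R|^{p-1}u_R$, the interior nonlinear contributions cancel, and the boundary term on the lateral part $\partial\Omega\cap B_1$ vanishes by the observation above. What remains is the integral over the spherical cap:
$$F'(R)=\int_{\Omega\cap\partial B_1}\left(\partial_r u_R+\frac{2}{p-1}u_R\right)\partial_R u_R\,d\sigma.$$
A direct calculation from $u_R(x)=R^{2/(p-1)}u(Rx)$ shows that at $|x|=1$,
$$\partial_r u_R+\frac{2}{p-1}u_R=R\,\partial_R u_R=R^{1+\frac{2}{p-1}}\left(\partial_r u+\frac{2}{p-1}\frac{u}{r}\right)\bigg|_{y=Rx},$$
so the integrand of $F'(R)$ is a non-negative square. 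Pulling back to $\Omega\cap\partial B_R$ via the change of variables $y=Rx$ produces exactly the factor $R^{\frac{4}{p-1}+2-d}$ and yields \eqref{derivative of E}. Monotonicity and the $C^1$-regularity claim then follow immediately.

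The main obstacle is justifying the integration by parts near the vertex of the cone, which is a singular point of $\partial\Omega$ where the assumption $u\in C(\overline{\Omega})$ does not guarantee any control on $\nabla u$. I would handle this by inserting a truncation $\zeta_n$ vanishing near the origin, exactly as in the proof of Proposition \ref{prop1}, carrying out the calculation with the test function $\zeta_n\,\partial_R u_R$, and sending $n\to\infty$; the additional terms carrying $\nabla\zeta_n$ vanish in the limit by the same kind of estimates used there. Away from the vertex, $\partial\Omega$ is $C^{2,\alpha}$, so standard elliptic regularity promotes $u$ to $C^{2,\alpha}_{\mathrm{loc}}(\overline{\Omega}\setminus\{0\})$, which legitimizes the boundary integrals on $\Omega\cap\partial B_\lambda$ and the differentiation under the integral sign.
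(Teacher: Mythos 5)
Your strategy is the same as the paper's: exploit the scaling identity $E(u;\lambda)=E(u_\lambda;1)$, differentiate in the scaling parameter, integrate by parts using the equation, kill the lateral boundary term via $u_R=0\Rightarrow\partial_R u_R=0$ on $\partial\Omega\setminus\{0\}$, and recognize the remaining spherical-cap integrand as $R(\partial_R u_R)^2\ge 0$; your algebra for \eqref{t4} and the pull-back to $\partial B_R$ is correct. The only genuine divergence is the treatment of the vertex. The paper does not use a volume cutoff: it excises the ball, defining $E_1^{\eps}$ on $\Omega_\lambda\setminus\Omega_{\lambda\eps}$ so that differentiation under the integral sign is legitimate by elliptic regularity, and the price is an extra surface term $\int_{\Omega\cap\partial B_{\eps}}\partial_r u_\lambda\,\partial_\lambda u_\lambda\,d\sigma$, which is controlled by the sharp Poincar\'e inequality in $H^1_0(\Omega\cap\partial B_\eps)$ together with the capacitary observation that $\liminf_{\eps\to0}(\eps|\ln\eps|)\|\nabla u_\lambda\|^2_{L^2(\Omega\cap\partial B_\eps)}=0$, so that a good sequence $\eps_n\to0$ can be chosen; one then integrates in $\lambda$ before passing to the limit. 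Your volume-cutoff variant can be made to work, but your claim that the $\nabla\zeta_n$-terms vanish ``by the same kind of estimates'' as in Proposition \ref{prop1} is too quick: the error there is $\int \nabla u_R\cdot\nabla\zeta_n\,\partial_R u_R$, and after substituting $\partial_R u_R=\frac1R\bigl(\frac{2}{p-1}u_R+r\partial_r u_R\bigr)$ the piece $\frac{2}{(p-1)R}\int u_R\,\nabla u_R\cdot\nabla\zeta_n$ carries $|\nabla\zeta_n|\sim n$ with \emph{no} compensating factor $|x|$, so it is not of the form $g\,|x|\,|\nabla\zeta_n|$ with $g\in L^1$ used in Proposition \ref{prop1}. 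You need an extra ingredient to kill it — either the scaled Poincar\'e inequality on the annular sectors $\Omega\cap(B_{2/n}\setminus B_{1/n})$ (using the Dirichlet condition on the lateral boundary), or the local boundedness of $u$ near the vertex coming from $u\in C(\super\Omega)$ combined with Cauchy--Schwarz. This is a fixable omission rather than a fatal gap, and it mirrors the Poincar\'e step that the paper makes explicit on $\Omega\cap\partial B_\eps$.
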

\proof Fix $\eps \in (0,1)$. Let
\begin{equation} \label{E1}
E_{1}^\eps(u;\lambda) = 
\lambda^{2\frac{p+1}{p-1}-d}\int_{\Omega_\lambda\setminus \Omega_{\lambda\eps}} \left(\frac12\vert\nabla u\vert^2 - \frac{1}{p+1}\vert  u\vert^{p+1}\right)\;dx.
\end{equation} 
For $\lambda>0$, let also $u_\lambda$ be defined by \eqref{rescaled solutions}. Then, $u_\lambda$ satisfies the three following properties: given $\lambda>0$, $u_\lambda$ solves \eqref{main},
\begin{equation}\label{t3}  
E_{1}^\eps(u;\lambda) = E_{1}^\eps(u_\lambda; 1),
\end{equation} 
and
\begin{equation} \label{t4} 
\lambda {\partial_\lambda u_{\lambda}} = \frac{2}{p-1} u_\lambda + r{\partial_r u_{\lambda}}\quad\text{ for every $(x,\lambda)\in\Omega\times\R_+^*$}.
\end{equation} 
By standard elliptic regularity, $u_\lambda\in C^2(\super{\Omega_{1}\setminus\Omega_{\eps}})$ . So $\lambda\mapsto E^\eps_{1}(u;\lambda)$ is $C^1$ 
and differentiating the right-hand side of \eqref{t3}, we find
$$
\frac{dE_{1}^\eps}{d\lambda}(u;\lambda) = \int_{\Omega_1\setminus\Omega_{\eps}}\nabla u_\lambda\cdot\nabla \partial_\lambda u_{\lambda}\;dx -\int_{\Omega_1\setminus\Omega_{\eps}}\vert u_\lambda\vert^{p-1}u_{\lambda}\partial_\lambda u_{\lambda}\;dx.
$$ 
Integrating by parts, using the equation, the boundary condition and the fact that $\Omega$ is a cone, we find 
\begin{align*}
\frac{dE_{1}^\eps}{d\lambda}(u;\lambda) &= \int_{\Omega\cap\partial B_1}\partial_r u_{\lambda}\partial_\lambda u_{\lambda}\;d\sigma-\int_{ \Omega\cap\partial B_{\eps}}\partial_r u_{\lambda}\partial_\lambda u_{\lambda}\;d\sigma.
\end{align*}
Using \eqref{t4},
\begin{align*}
\left\vert\int_{ \Omega\cap\partial B_{\eps}}\partial_r u_{\lambda}\partial_\lambda u_{\lambda}\;d\sigma\right\vert&=
\frac1\lambda\left\vert\int_{ \Omega\cap\partial B_{\eps}}\frac2{p-1}u_\lambda\partial_r u_{\lambda}+\eps (\partial_ru_{\lambda})^2\;d\sigma\right\vert\\
&\le C \|u_\lambda \|_{L^2( \Omega\cap\partial B_{\eps})}  \|\nabla u_\lambda \|_{L^2( \Omega\cap\partial B_{\eps})} + \eps \|\nabla u_\lambda \|_{L^2( \Omega\cap\partial B_{\eps})}^2\\
&\le C\eps \|\nabla u_\lambda \|_{L^2( \Omega\cap\partial B_{\eps})}^2,
\end{align*}
where we used (the sharp) Poincaré inequality in $H^1_0(\partial B_\eps\cap\Omega)$ in the last inequality.
Since $u_\lambda\in H^1(\Omega_{1})$, $\liminf_{\eps\to0}{(\eps\vert\ln\eps\vert)}\|\nabla u_\lambda \|^2_{L^2( \Omega\cap\partial B_{\eps})}=0$ and so there exists a sequence $\eps_{n}\to 0$ such that 
$$\|\nabla u_\lambda \|^2_{L^2( \Omega\cap\partial B_{\eps_{n}})}\le\frac 1{\eps_n\vert\ln\eps_n\vert}.$$ 
And so
\begin{align*}
C|\ln\eps_n|^{-1} &\ge \left\vert \frac{dE_{1}^{\eps_{n}}}{d\lambda}(u;\lambda)- \int_{\Omega\cap\partial B_1}\partial_r u_{\lambda}\partial_\lambda u_{\lambda}\;d\sigma\right\vert\\
&\ge \left\vert \frac{dE_{1}^{\eps_{n}}}{d\lambda}(u;\lambda)-\lambda \int_{\Omega\cap\partial B_1}(\partial_\lambda u_{\lambda})^2\;d\sigma + \frac{2}{p-1}\int_{\Omega\cap\partial B_1}u_\lambda(\partial_\lambda u_{\lambda})\;d\sigma\right\vert \\
&\ge \left\vert \frac{d}{d\lambda}\left(E_{1}^{\eps_{n}}(u;\lambda)+\frac{1}{p-1}\int_{\Omega\cap\partial B_1} u_\lambda^2\;d\sigma\right)-\lambda \int_{\Omega\cap\partial B_1} (\partial_\lambda u_{\lambda})^2\;d\sigma \right\vert.\\
\end{align*}
For any fixed $\lambda>0$, take $0<\lambda_1<\lambda_2\le\lambda$ and integrate the above inequality between $\lambda_1$ and $\lambda_2$. Then, letting $\epsilon_n\to0$, we find that
\begin{equation}\label{mff}
E(u;\lambda_2)-E(u;\lambda_1) = \int_{\lambda_1}^{\lambda_2}\lambda \int_{\Omega\cap\partial B_1} (\partial_\lambda u_{\lambda})^2\;d\sigma\, d\lambda.
\end{equation}

Hence, $\lambda\mapsto E(u;\lambda)\in C^1(\R_+^*)$ and scaling back the above quantity, the lemma follows.\hfill\qed

\begin{remark}
Let $u\in C^2(\Omega)\cap C(\super\Omega)$ be a solution of \eqref{main}. If the function $\lambda \mapsto E(u;\lambda)$ is constant on $(0, +\infty)$, then by \eqref{derivative of E} 
it follows that $u$ is homogeneous of degree $-\frac{2}{p-1}$.
\end{remark}

\begin{remark}\label{form-monotonia-coni-troncati-0}
The proof of Lemma \ref{monotonicity formula} immediately extends to solutions defined on $\Omega_R$, $R>0$ i.e. let $R>0$ and $u\in C^2(\Omega_R)\cap C(\super\Omega_R)$ be a solution of 
\begin{equation} \label{main-troncato}
\left\{
\begin{aligned}
-\Delta u &= \vert u\vert^{p-1}u&\quad\text{in $\Omega_R$,}\\
u&=0&\quad\text{on $\partial\Omega \cap B_R$.}
\end{aligned}
\right. 
\end{equation}
For $\lambda \in (0,R)$ let $E$ be as in \eqref{E}. Then $\lambda\mapsto E(u;\lambda)$ is a non-decreasing function. Moreover, $E(u;\cdot)\in C^1(0,R)$ and, for all $\lambda \in (0,R)$,
\begin{equation}\label{derivative of E-troncata}
\frac{d E}{d\lambda}(u;\lambda) = \lambda^{\frac{4}{p-1}+2-d}\int_{\Omega\cap\partial B_\lambda}\left({\partial_r  u}+\frac{2}{p-1}\frac { u}r\right)^2\;d\sigma.
\end{equation}
and for any $0<\lambda_1<\lambda_2 < R $ we find that
\begin{equation}\label{mff-coni-troncati-0}
E(u;\lambda_2)-E(u;\lambda_1) = \int_{\lambda_1}^{\lambda_2}\lambda \int_{\Omega\cap\partial B_1} (\partial_\lambda u_{\lambda})^2\;d\sigma\, d\lambda.
\end{equation}
\end{remark}

\begin{remark}\label{form-monotonia-coni-troncati} The monotonicity formula remains valid for weak stable solutions in the sense that the function defined on $\R_+^*$ by $\lambda \mapsto E(u;\lambda)$ is monotone, absolutely continuous and its derivative is given by \eqref{derivative of E} for a.e. $\lambda\in(0,+\infty)$. 

Indeed, if $u$ is weak stable solution of \eqref{main}, then, by Proposition 13 in \cite{df}, for every $R>0$, there exists a sequence of non-negative solutions $u_n^R\in C^2(\super{B_R^+})$ such that $u_n^R=0$ on $\partial\R^d_+\cap B_R$, $u_n^R\longrightarrow u$ a.e. in $\Omega_R = B_R^+$, and $u_n^R\to u$ in $H^1(\Omega_R)$, as $n\to+\infty$. By Remark \ref{form-monotonia-coni-troncati-0},
$\lambda\mapsto E(u_n^R;\lambda)$ is a non-decreasing function of $\lambda \in (0,R)$, $E(u_n^R;\cdot)\in C^1(0,R)$ and for any $0<\lambda_1<\lambda_2 < R $ we find that
\begin{equation}\label{mff-coni-troncati}
E(u_n^R;\lambda_2)-E(u_n^R;\lambda_1) = \int_{\lambda_1}^{\lambda_2}\lambda \int_{\Omega\cap\partial B_1} (\partial_\lambda (u_n^R)_{\lambda})^2\;d\sigma\, d\lambda.
\end{equation}
Passing to the limit in \eqref{mff-coni-troncati} as $n\to+\infty$, our claim follows.


\end{remark}

\subsubsection{Analysis of the limiting profile $u_\infty$}\label{consequences}

In this section, we will show that $u_\infty$ (given in Lemma \ref{convergence}) is a homogeneous function. 
To do so, first note that given a weak solution $u$ of \eqref{main}, if $u_\lambda=u$ for all $\lambda>0$, then $u$ is homogeneous of degree $-\frac{2}{p-1}$, and so it must have the form
\begin{equation}\label{sing solution}
    u(r,\theta)=r^{-\frac{2}{p-1}}v(\theta),
\end{equation}
for some measurable function $v\colon A\subset\mathbb{S}^{d-1}\to\R$. This is a simple consequence of taking $\lambda=|x|^{-1}$ in \eqref{rescaled solutions}. Note that by \eqref{sing solution}, $u_\lambda=u$ automatically excludes the possibility that $u$ is a classical solution of \eqref{main} because of the singularity near $x=0$ (unless $v=0$).

Now we prove an important property of the limit function $u_\infty$ given in Lemma \ref{convergence}.
\begin{lemma}\label{properties of uinf}
    Let $p>p_S(d)$ and assume that $u\in C^2(\Omega)\cap C(\super\Omega)$ is a solution of \eqref{main} stable outside a compact set. 
    The blow-down limit $u_\infty$ is homogeneous of degree $-\frac{2}{p-1}$ and thus has the form \eqref{sing solution} for a suitable $v\in\,H^1_0\cap\,L^{p+1}(A)$.
\end{lemma}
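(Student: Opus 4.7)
The plan is to combine the monotonicity formula of Lemma~\ref{monotonicity formula} with the strong convergence of the blow-down family from Lemma~\ref{convergence} to force $u_\infty$ to satisfy the ODE $\partial_r u_\infty+\frac{2}{p-1}\frac{u_\infty}{r}=0$ a.e.\ in $\Omega$, which is exactly the homogeneity \eqref{sing solution}. The first step is to show that $\lambda\mapsto E(u;\lambda)$ is uniformly bounded on $(0,+\infty)$, so that the monotonicity yields the existence of a limit $E_\infty := \lim_{\lambda\to +\infty}E(u;\lambda)\in\R$. Using the scaling identity $E(u;\lambda)=E(u_\lambda;1)$, it suffices to bound $\int_{\Omega_1}(|\nabla u_\lambda|^2+|u_\lambda|^{p+1})\,dx+\int_{\Omega\cap\partial B_1}u_\lambda^2\,d\sigma$ uniformly in $\lambda\ge 1$. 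The volume integrals are controlled by Lemma~\ref{energy} at $R=2$, while the spherical integral is handled by a standard trace inequality on the annulus $B_2\setminus\overline{B_{1/2}}$: since $u_\lambda\psi\in H^1_0(\Omega)$ for every $\psi\in C^1_c(\R^d)$ (Lemma~\ref{ls}), the zero-extension of $u_\lambda$ across $\partial\Omega$ lies in $H^1$ of that annulus with norm controlled by $\|u_\lambda\|_{H^1(\Omega_2)}$.

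Next I would show $E(u_\infty;R)=E_\infty$ for a.e.\ $R>0$. On one side, $E(u_{\lambda_n};R)=E(u;\lambda_n R)\to E_\infty$ for every fixed $R$. On the other side, the volume integrals in $E(u_{\lambda_n};R)$ converge to those of $u_\infty$ thanks to the strong $H^1\cap L^{p+1}$ convergence on $\Omega_R$ from Lemma~\ref{convergence}, and for the spherical boundary term, Fubini applied to
\[
\int_R^{R+1}\int_{\Omega\cap\partial B_r}(u_{\lambda_n}-u_\infty)^2\,d\sigma\,dr=\int_{\Omega_{R+1}\setminus\Omega_R}(u_{\lambda_n}-u_\infty)^2\,dx\longrightarrow 0
\]
yields, along a further subsequence, convergence of $\int_{\Omega\cap\partial B_R}(u_{\lambda_n}-u_\infty)^2\,d\sigma\to 0$ for a.e.\ $R>0$.

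Since $u_\infty$ is itself a weak stable solution of \eqref{main} by Lemma~\ref{convergence}, Remark~\ref{form-monotonia-coni-troncati} ensures that $\lambda\mapsto E(u_\infty;\lambda)$ is absolutely continuous with derivative given by \eqref{derivative of E} almost everywhere. Being a.e.\ equal to the constant $E_\infty$, it is identically constant on $(0,+\infty)$, so its derivative vanishes a.e. By \eqref{derivative of E} this forces $\partial_r u_\infty+\frac{2}{p-1}\frac{u_\infty}{r}=0$ a.e.\ in $\Omega$, and integrating in $r$ yields $u_\infty(r,\theta)=r^{-\frac{2}{p-1}}v(\theta)$ for some measurable $v$ on $A$. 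The regularity $v\in H^1_0(A)\cap L^{p+1}(A)$ then follows by plugging this ansatz into the finite integrals $\int_{\Omega_2\setminus\Omega_1}|\nabla u_\infty|^2\,dx$ and $\int_{\Omega_2\setminus\Omega_1}|u_\infty|^{p+1}\,dx$ from Lemma~\ref{convergence} (splitting $\nabla u_\infty$ into radial and tangential parts to extract $\|v\|_{H^1(A)}$ and $\|v\|_{L^{p+1}(A)}$), and by noting that $u_\infty\psi\in H^1_0(\Omega)$ for every $\psi\in C^1_c(\R^d)$ yields the Dirichlet condition $v=0$ on $\partial A$ in the trace sense.

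The main obstacle is the transfer $E(u_{\lambda_n};R)\to E(u_\infty;R)$: the spherical boundary term converges only for a.e.\ $R$, and this suffices only because Remark~\ref{form-monotonia-coni-troncati} provides the absolute continuity of $R\mapsto E(u_\infty;R)$ needed to upgrade ``constant a.e.'' to ``constant everywhere''.
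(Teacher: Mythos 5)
Your overall strategy is the same as the paper's -- use the monotonicity formula together with the strong $H^1\cap L^{p+1}$ convergence of $(u_{\lambda_n})$ to force the scale-invariant energy to become constant, hence $\partial_r u_\infty+\frac{2}{p-1}\frac{u_\infty}{r}=0$ -- and your first step (uniform boundedness of $E(u;\lambda)$ via Lemma~\ref{energy} and a trace inequality, hence existence of the finite limit $E_\infty$) is exactly what the paper does. The difference is in how the limit is taken, and this is where you have a genuine gap. You transfer the energy to the limit, proving $E(u_\infty;R)=E_\infty$ for a.e.\ $R$, and then you need a monotonicity formula \emph{for the weak stable solution} $u_\infty$ to convert ``$E(u_\infty;\cdot)$ constant'' into the pointwise ODE. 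For this you invoke Remark~\ref{form-monotonia-coni-troncati}. But the justification of that remark given in the paper rests on Proposition 13 of \cite{df}, which supplies approximating classical solutions only for \emph{nonnegative} weak solutions in the \emph{half-space} ($\Omega_R=B_R^+$); it is used in the paper solely for Corollary~\ref{cor:SOAC}. Here $u_\infty$ is a possibly sign-changing weak solution on a general cone, so the remark, as established, does not apply to it, and your key step is unsupported. The paper sidesteps this entirely: it applies the classical monotonicity formula to the smooth rescalings $u_{\lambda_n}$ themselves, rewrites $E(u_{\lambda_n};R_2)-E(u_{\lambda_n};R_1)$ as the \emph{volume} integral $\int_{\Omega\cap B_{R_2}\setminus B_{R_1}}|x|^{\frac{4}{p-1}-d}\bigl(\frac{2}{p-1}u_{\lambda_n}+|x|\partial_r u_{\lambda_n}\bigr)^2dx$, passes to the limit there by strong $H^1$ convergence, and observes that the left-hand side tends to $E_\infty-E_\infty=0$ since both $E(u;\lambda_nR_2)$ and $E(u;\lambda_nR_1)$ converge to $E_\infty$ by monotonicity and boundedness. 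This also makes your a.e.-$R$ boundary-trace bookkeeping unnecessary. Your argument can be repaired along the same lines (pass to the limit in the integrated identity \eqref{mff} for $u_{\lambda_n}$ rather than quoting the remark), but as written it does not stand on the results you cite.

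A secondary, smaller issue: for the conclusion $v\in H^1_0(A)$ you argue that $u_\infty\psi\in H^1_0(\Omega)$ gives ``$v=0$ on $\partial A$ in the trace sense''. Passing from a zero trace statement to membership in $H^1_0(A)$ requires some regularity of $\partial A$, which Lemma~\ref{properties of uinf} does not assume. The paper instead sets $w_n=|x|^{\frac{2}{p-1}}u_{\lambda_n}$, uses the strong $H^1$ convergence on the annulus to select a radius $\bar r$ at which $\theta\mapsto w_n(\bar r\theta)$ converges to $v$ in $H^1(A)$, and notes that these are $C^2(\overline A)$ functions vanishing on $\partial A$, hence in $H^1_0(A)$, which is closed; you should adopt such a slicing argument rather than a trace characterization.
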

\begin{proof}
After fixing two radii $R_2>R_1>0$ we set 
\begin{equation*}
    a_{\lambda_n}:=E(u_{\lambda_n};R_2),
\end{equation*}
\begin{equation*}
    b_{\lambda_n}:=E(u_{\lambda_n};R_1),
\end{equation*}
and $c_{\lambda_n}:=a_{\lambda_n}-b_{\lambda_n}$. 

We use the fundamental theorem of calculus and the monotonicity formula \eqref{derivative of E} to express $c_{\lambda_n}$ as 
\begin{align*}
    c_{\lambda_n}&=E(u_{\lambda_n};R_2)-E(u_{\lambda_n};R_1)=\int_{R_1}^{R_2}\frac{dE}{d\lambda}(u_{\lambda_n};\lambda)d\lambda=\int_{R_1}^{R_2}\frac{d}{d\lambda}(E(u_{\lambda_n\,\lambda};1))d\lambda\\
    &=\int_{R_1}^{R_2}\int_{\partial\,B_1\cap\Omega}\lambda\left({\partial_\lambda\,u_{\lambda_n\lambda}}{}\right)^2d\sigma\,d\lambda=\int_{R_1}^{R_2}\int_{\partial\,B_1\cap\Omega}\frac{1}{\lambda}\left(\frac{2}{p-1}u_{\lambda_n\lambda}+|x|{\partial_r\,u_{\lambda_n\lambda}}\right)^2d\sigma\,d\lambda\\
    &=\int_{R_1}^{R_2}\lambda^{\frac4{p-1}-d}\int_{\partial\,B_\lambda\cap\Omega}\left(\frac{2}{p-1}u_{\lambda_n}+|x|{\partial_r\,u_{\lambda_n}}\right)^2d\sigma\,d\lambda\\
    &=\int_{\Omega\cap B_{R_2}\setminus B_{R_1}}\vert x\vert^{\frac4{p-1}-d}\left(\frac{2}{p-1}u_{\lambda_n}+|x|{\partial_r\,u_{\lambda_n}}\right)^2\,dx.
\end{align*}
Since $(u_{\lambda_n})$ converges strongly in $H^1(\Omega_R)$, we deduce that
$$
\lim_{n\to+\infty} c_{\lambda_n} = \int_{\Omega\cap B_{R_2}\setminus B_{R_1}}\vert x\vert^{\frac4{p-1}-d}\left(\frac{2}{p-1}u_{\infty}+|x|{\partial_r\,u_{\infty}}\right)^2\,dx.
$$

Next, we  prove that \[c_{\lambda_n}\to0.\] By Lemma \ref{energy} and the trace inequality $\int_{\partial B_R \cap \Omega} u^2 d\sigma \leq C \big (R \int_{B_R \cap \Omega}\vert \nabla u \vert^2 dx + R^{-1} \int_{B_R \cap \Omega} u^2 dx \big) $, the sequences $(a_{\lambda_n})$ and 
$(b_{\lambda_n})$ are bounded. Using the monotonicity formula \eqref{derivative of E}, we deduce that $(a_{\lambda_n})$ and $(b_{\lambda_n})$ are nondecreasing and converge to the same finite limit.

Hence,
$$
0= \int_{\Omega\cap B_{R_2}\setminus B_{R_1}}
\vert x \vert^{\frac4{p-1}-d}\left(\frac{2}{p-1}u_{\infty}+|x|{\partial_r\,u_{\infty}}\right)^2\,dx= \int_{\Omega\cap B_{R_2}\setminus B_{R_1}}\vert x\vert^{2-d}\left( \partial_r (\vert x \vert^{\frac{2}{p-1}} u_{\infty})\right)^2\,dx
$$
so that $ w := \vert x \vert^{\frac{2}{p-1}} u_{\infty}$ is homogeneous of degree zero. Therefore we can find a measurable function $v\colon A\subset\mathbb{S}^{d-1}\to\R$ such that $w(x) = v \big(\frac{x}{\vert x \vert}\big)$ and thus $u_{\infty}$ has the form \eqref{sing solution}. Note that $w \in H^1\cap\,L^{p+1}(\Omega_2 \setminus \Omega_{\frac{1}{2}})$, since $u_{\infty} \in H^1\cap\,L^{p+1}(\Omega_R)$ for every $R>0$, and so $v\in\,H^1\cap\,L^{p+1}(A)$ by Fubini's theorem. To conclude we need to prove that $v$ belongs to $ H^1_0(A)$. To this end we set $w_n=\vert x \vert^{\frac{2}{p-1}} u_{\lambda_n}$ and  
observe that the sequence $(w_n)$ converges to $w$ in $H^1(\Omega_2 \setminus \Omega_{\frac{1}{2}})$, that is, $\int_{\frac{1}{2}}^2 \int_A \Big (\vert w_n - w \vert^2 + \vert \nabla (w_n - w) \vert^2\Big)(r\theta) r^{d-1} d\sigma dr \longrightarrow 0$, as $n \to \infty$.  
Therefore, up to a subsequence, for almost every $r \in (\frac{1}{2},2)$ we get that $\int_A \Big (\vert w_n - w \vert^2 + \vert \nabla (w_n - w) \vert^2\Big)(r\cdot) d\sigma  \longrightarrow 0$ and so we can find $ \bar r \in (\frac{1}{2},2)$ such that 
$$
\int_A \Big (\vert w_n - w \vert^2 + \vert \nabla' (w_n - w) \vert^2\Big)(\bar r \cdot) d\sigma  \longrightarrow 0,$$
where $\nabla'$ is the Riemannian gradient on the $(d-1)$-dimensional sphere. The latter and the fact that $w_{\vert A}=v $ imply that the sequence of $C^2(\overline{A})$ functions $ \theta \mapsto w_n(\bar r \theta)$ converges to $v$ in $H^1(A)$. It follows that $ v \in H^1_0(A)$ since $w_n(\bar r \cdot) = 0$ on $\partial A$.

\end{proof}



\subsubsection{Homogeneous and stable solutions}\label{homogeneous solutions}
In the previous section, we have shown that the blow-down limit $u_\infty$ of the rescaled solutions $u_\lambda$ is a weak solution of \eqref{main}, and it is homogeneous and stable. Here we study such solutions with the goal of proving that they are necessarily equal to $0$.

Given a homogeneous solution $u(r,\theta)=r^{-\frac{2}{p-1}}v(\theta)$ of \eqref{main}, one can make a simple formal computation to conclude that the nonradial factor $v$ satisfies the following equation
\begin{equation}\label{Eq on v}
    \begin{cases}
		-\Delta' v + \mu\,v \,=\, |v|^{p-1}v & \textrm{in}\;\; A\,, \\
		v \,=\, 0 & \textrm{on}\;\; \partial A\,,
	\end{cases}
\end{equation}
where $\mu:=\frac{2}{p-1}\left(d-1-\frac{p+1}{p-1}\right)$ and $-\Delta'$ is the Laplace-Beltrami operator on the $(d-1)$-dimensional sphere. If $u$ is also stable then $v$ satisfies the following inequality for suitable test functions $\psi$:
\begin{equation}\label{left of stable}
    \frac{(d-2)^2}{4}\int_{A}\psi^2d\sigma+\int_{A}|\nabla'\psi|^2d\sigma\geq\,p\int_{A}|v|^{p-1}\,\psi^2d\sigma,
\end{equation}
where $\nabla'$ is the Riemannian gradient on the $(d-1)$-dimensional sphere, see e.g. \cite[pp. 5245-5246]{Wang} for the proof\footnote{In \cite[pp. 5245-5246]{Wang}, $\psi$ is supposed to be smooth but the result remains valid for $\psi\in H^1_0(A)\cap L^{p+1}(A)$ with the same proof and can be extended to any $\psi\in H^1_0(A)$ by Fatou's lemma.} of \eqref{left of stable}.
We work in the class of weak solutions of \eqref{main} and make these considerations precise in the following lemma. 
\begin{lemma}\label{u to v}
Let $u$ be a weak, homogeneous, and stable solution of \eqref{main}. Then $v\in H^1_0\,\cap\,L^{p+1}(A)$ is a weak solution of \eqref{Eq on v} which satisfies the stability-type estimate \eqref{left of stable} for any $\psi\in\,H^1_0(A)$.
\end{lemma}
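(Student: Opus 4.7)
\medskip

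\noindent\textbf{Proof plan.} The plan is to exploit the separation of variables $u(r,\theta)=r^{-\alpha}v(\theta)$, with $\alpha=\tfrac{2}{p-1}$, in order to transfer both the weak equation \eqref{eqfaibleplus} and the stability inequality \eqref{def stable} from $\Omega$ down to $A$ by testing with functions of product form. The key algebraic identities driving the argument are
$$
\alpha p=\alpha+2,\qquad \alpha(p-1)=2,\qquad \alpha(d-2-\alpha)=\mu,
$$
all of which follow from $\alpha=\tfrac{2}{p-1}$.

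First I would check that $v\in H^1_0(A)\cap L^{p+1}(A)$. Writing $dx=r^{d-1}dr\,d\sigma$ and using the homogeneity, a direct polar-coordinates computation gives
$$
\int_{\Omega_R}|u|^{p+1}\,dx=\Big(\int_0^R r^{d-1-\alpha(p+1)}dr\Big)\int_A|v|^{p+1}\,d\sigma,
$$
and analogously for $\int_{\Omega_R}|\nabla u|^2\,dx$. Since $u\in L^{p+1}(\Omega_R)\cap H^1(\Omega_R)$, the angular integrals are finite, so $v\in L^{p+1}(A)\cap H^1(A)$. To upgrade to $v\in H^1_0(A)$ I would copy the slicing trick used at the end of the proof of Lemma \ref{properties of uinf}: the function $w(x):=|x|^{\alpha}u(x)=v(\theta)$ lies in $H^1$ of the annulus $\Omega\cap\{1/2<|x|<2\}$ and has vanishing trace on $\partial\Omega\setminus\{0\}$, so restricting to a.e.~radius $\bar r$ gives $v\in H^1_0(A)$.

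Next I would derive \eqref{Eq on v} by testing \eqref{eqfaibleplus} with $\phi(x)=h(r)\eta(\theta)$, where $h\in C^1_c((0,\infty))$ and $\eta\in H^1_0(A)\cap L^{p+1}(A)$; such a $\phi$ has bounded support and lies in $H^1_0(\Omega)\cap L^{p+1}(\Omega)$. Splitting $\nabla u\cdot\nabla\phi=u_r\phi_r+r^{-2}\nabla' u\cdot\nabla'\phi$, integrating once by parts in $r$, and setting $I_h:=\int_0^\infty r^{d-\alpha-3}h(r)\,dr$, the identity $\alpha p=\alpha+2$ makes all three radial factors equal to $I_h$, so that \eqref{eqfaibleplus} becomes
$$
I_h\Big[\mu\int_A v\eta\,d\sigma+\int_A\nabla' v\cdot\nabla'\eta\,d\sigma-\int_A|v|^{p-1}v\eta\,d\sigma\Big]=0.
$$
Choosing $h\ge 0$ nontrivial gives $I_h>0$, and dividing through yields the weak form of \eqref{Eq on v}.

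For the stability-type estimate I would test \eqref{def stable} with $\varphi(x)=g(r)\psi(\theta)$ for $g\in C^1_c((0,\infty))$ and $\psi\in C^1_c(A)$; since $\alpha(p-1)=2$, the radial weights match up and one obtains
$$
p\int_A|v|^{p-1}\psi^2\,d\sigma\;\le\;\frac{B_2(g)}{B_1(g)}\int_A\psi^2\,d\sigma+\int_A|\nabla'\psi|^2\,d\sigma,
$$
with $B_1(g):=\int_0^\infty g^2 r^{d-3}dr$ and $B_2(g):=\int_0^\infty(g')^2 r^{d-1}dr$. The main obstacle is to push the radial factor down to its sharp value: by the weighted Hardy inequality on $(0,\infty)$ we have $B_2(g)/B_1(g)\ge (d-2)^2/4$, with the infimum realised only in the limit by the nonadmissible function $r^{-(d-2)/2}$. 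I would therefore construct explicit minimising sequences $g_n(r)=r^{-(d-2)/2}\chi_n(r)$, where $\chi_n$ is a logarithmic cutoff supported in $[1/n,n]$; a direct computation shows $B_2(g_n)/B_1(g_n)\to(d-2)^2/4$. Passing to the limit along this sequence yields \eqref{left of stable} for every $\psi\in C^1_c(A)$, and Fatou's lemma (as in the footnote after \eqref{left of stable}) extends it to every $\psi\in H^1_0(A)$.
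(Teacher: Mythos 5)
Your proposal is correct and follows essentially the same route as the paper: the weak form of \eqref{Eq on v} is obtained by exactly the same separation-of-variables computation with product test functions and the identity $\alpha p=\alpha+2$, and your derivation of \eqref{left of stable} via product test functions $g(r)\psi(\theta)$ together with the sharp weighted Hardy inequality and logarithmic cutoffs is precisely the argument the paper delegates to the cited pages of Wang. If anything, your write-up is more self-contained, since it also records why $v\in H^1_0(A)\cap L^{p+1}(A)$ (by the polar-coordinate factorization and the radial slicing trick), which the paper establishes only in the proof of Lemma \ref{properties of uinf}.
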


\begin{proof}
To prove \eqref{Eq on v}, we begin by testing \eqref{main} with the test function $\varphi(r,\theta)=\chi(r)\psi(\theta)$
where $\chi\colon\mathbb{R}_+\to[0,1]$ is a smooth, positive function compactly supported away from $0$,  and $\psi\in\,H^1_0(A)\cap L^{p+1}(A)$. Setting $\alpha=\frac{2}{p-1}$,
\begin{align*}
    \int_\Omega\nabla u\nabla\varphi\,dx&=\underbrace{\int_\Omega\,\partial_r(r^{-\alpha})v(\theta)\partial_r\chi(r)\psi(\theta)\,dx}_{I_1}+\underbrace{\int_\Omega\,r^{-\alpha-2}\chi(r)\nabla' v(\theta)\nabla'\psi(\theta)\,dx}_{I_2}\\
    &=\underbrace{\int_\Omega\,r^{-\frac{2p}{p-1}}\chi(r)|v(\theta)|^{p-1}v(\theta)\,\psi(\theta)\,dx}_{I_3}.
\end{align*}
We perform a separation of variables of the three integral terms.
\begin{align*}
    I_1&=-\alpha\int_0^\infty\,r^{-\alpha-2+d}\partial_r\chi(r)dr\int_{A}v(\theta)\psi(\theta)d\sigma\\
    &=\alpha(d-2-\alpha)\int_0^\infty\,r^{-\alpha-3+d}\chi(r)dr\int_{A}v(\theta)\psi(\theta)d\sigma.\\\\
    I_2&=\int_0^\infty\,r^{-\alpha-3+d}\chi(r)dr\int_{A}\nabla' v(\theta)\nabla'\psi(\theta)d\sigma,\\
    I_3&=\int_0^\infty\,r^{-p\alpha-1+d}\chi(r)dr\int_{A}|v(\theta)|^{p-1}v(\theta)\,\psi(\theta)d\sigma.
\end{align*}
Since $\alpha+2=p\alpha$, we can cancel all of the integral terms involving $r$ to conclude 
\begin{equation}\label{weak sol of v}
    \int_{A}\nabla' v\nabla'\psi\,d\sigma+\alpha(d-2-\alpha)\int_{A}v\psi\,d\sigma=\int_{A}|v|^{p-1}v\psi\,d\sigma.
\end{equation}
That is, $v\in H^1_0\cap L^{p+1}(A)$ is a weak solution of \eqref{Eq on v}. 
\end{proof}

We conclude this section by proving a Liouville-type result for weak solutions of \eqref{Eq on v} which satisfy estimate \eqref{left of stable}, under the additional condition for $p>1$
\begin{equation}\label{numerical condition}
    p\mu-\frac{(d-2)^2}{4}+(p-1)\lambda_1\geq0,
\end{equation}
where $\lambda_1$ is the first eigenvalue of the Laplace-Beltrami operator on $A$. Note that the term $(p-1)\lambda_1$ in \eqref{numerical condition} gives an improvement of the condition in \cite{Farina}: $1<p<p_{JL}(d)\iff p\mu-\frac{(d-2)^2}{4}>0 $. In other words, the condition $1<p<p_{JL}(d)$ is optimal only for stable solutions in the whole space $\R^d$.
\begin{lemma}\label{Liouville for v}
    Let $v\in H^1_0\cap L^{p+1}(A)$ be a weak solution of \eqref{Eq on v} which satisfies the stability-type estimate \eqref{left of stable}. Then for all $p$ satisfying \eqref{numerical condition}, $v=0$.
\end{lemma}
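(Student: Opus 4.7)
The plan is to test both the weak formulation of \eqref{Eq on v} and the stability-type inequality \eqref{left of stable} against $v$ itself, then exploit the Rayleigh characterization of $\lambda_1$ to reach a contradiction with \eqref{numerical condition} unless $v\equiv0$.

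Concretely, since $v\in H^1_0(A)\cap L^{p+1}(A)$, it is an admissible test function on both sides. From the weak formulation one gets the energy identity
$$\int_A|\nabla' v|^2\,d\sigma+\mu\int_A v^2\,d\sigma=\int_A|v|^{p+1}\,d\sigma,$$
while plugging $\psi=v$ into \eqref{left of stable} gives
$$\tfrac{(d-2)^2}{4}\int_A v^2\,d\sigma+\int_A|\nabla' v|^2\,d\sigma\geq p\int_A|v|^{p+1}\,d\sigma.$$
Eliminating $\int_A|v|^{p+1}$ between the two yields
$$(p-1)\int_A|\nabla' v|^2\,d\sigma+\left[p\mu-\tfrac{(d-2)^2}{4}\right]\int_A v^2\,d\sigma\leq 0.$$
The variational definition of $\lambda_1$ provides $\int_A|\nabla' v|^2\,d\sigma\geq\lambda_1\int_A v^2\,d\sigma$, so since $p>1$ we conclude
$$\left[(p-1)\lambda_1+p\mu-\tfrac{(d-2)^2}{4}\right]\int_A v^2\,d\sigma\leq 0.$$
Under the strict version of \eqref{numerical condition}, the bracket is positive and we obtain $v\equiv0$ at once.

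The only subtle point is handling the boundary case where equality holds in \eqref{numerical condition}, which I expect to be the main obstacle. In that situation, saturation of Poincaré's inequality forces $v$ to lie in the first eigenspace of $-\Delta'$ on $A$; on any connected component of $A$ where $v\not\equiv 0$, this means $v=c\varphi_1$ for some $c\neq 0$ and the positive first eigenfunction $\varphi_1$. Substituting into \eqref{Eq on v} produces the pointwise relation $(\lambda_1+\mu)c\varphi_1=|c|^{p-1}c\,\varphi_1^p$, so $\varphi_1^{p-1}$ would have to be constant on that component. Since $p>1$ and $\varphi_1$ vanishes on $\partial A$ while being positive in the interior, this is impossible, and we conclude $c=0$, hence $v\equiv0$. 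This closes the gap between the strict and non-strict cases and completes the proof.
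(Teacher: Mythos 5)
Your proposal is correct and follows essentially the same route as the paper: test both the weak formulation and the stability inequality with $v$, combine, and invoke the Poincar\'e inequality with optimal constant $\lambda_1$. You even spell out the equality case (where $v$ must be a first eigenfunction, which is incompatible with equation \eqref{Eq on v}) more explicitly than the paper, which simply calls this step trivial.
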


\begin{proof}
First, we take the solution $v$ (multiplied by $p$) as a test function in \eqref{weak sol of v} to obtain
\begin{equation*}\label{test with v2}
    p\mu\int_{A}v^2d\sigma+p\int_{A}|\nabla'v|^2d\sigma=p\int_{A}|v|^{p}v\,d\sigma.
\end{equation*}
Likewise, testing \eqref{left of stable} with $v$ yields
\begin{equation*}\label{test with v}
    \frac{(d-2)^2}{4}\int_{A}v^2d\sigma+\int_{A}|\nabla'v|^2d\sigma\geq\,p\int_{A}|v|^{p}v\,d\sigma.
\end{equation*}
Combining these two lines gives
\begin{equation*}
    \left(p\mu-\frac{(d-2)^2}{4}\right)\int_{A}v^2d\sigma+(p-1)\int_{A}|\nabla'v|^2d\sigma\leq0.
\end{equation*}
We use the trivial identity
\[\int_A|\nabla'v|^2d\sigma=-\left(\lambda_1\int_Av^2d\sigma-\int_A|\nabla'v|^2d\sigma\right)+\lambda_1\int_Av^2d\sigma\]
to deduce the following:
\begin{equation}\label{e3}
    \left(p\mu-\frac{(d-2)^2}{4}+\lambda_1(p-1)\right)\int_{A}v^2d\sigma\leq(p-1)\int_A\left(\lambda_1v^2-|\nabla'v|^2\right)d\sigma\leq0,
\end{equation}
where the final inequality follows from Poincar\'{e}'s inequality on $A$ with optimal constant $\lambda_1$.
By \eqref{numerical condition} and \eqref{e3}, $v$ is a principle eigenfunction of the Laplace-Beltrami operator on $A$. Combining this with the fact that $v$ also satisfies \eqref{Eq on v} leads trivially to the conclusion that $v=0$.
\end{proof}

\subsubsection{A Poho\v{z}aev-type result on subdomains of $\Sph$}
The following Poho\v{z}aev-type result is proven by Bidaut-V\'{e}ron, Ponce, and V\'{e}ron in \cite[Theorem 2.1]{BVPV} for smooth solutions $v$ to \eqref{Eq on v}. Since we deal with weak solutions, we construct smooth approximations $v_\lambda$ of $v$ and use the $H^1_{loc}$ convergence of $u_\lambda$ to $u_\infty$ (along a sequence) to prove that the inequality also holds for the blow-down limit $u_\infty$.
\begin{lemma}\label{pohozaev statement}
Let $A$ be a $C^{2,\alpha}$ domain of $\mathbb S^{d-1}_+$ which is star-shaped with respect to the north pole, and let $u_\infty(r,\theta)=r^{-\frac{2}{p-1}}v(\theta)$ be the blow-down limit of $(u_\lambda)$ as above. Then $v\in\,H^1_0(A)$ satisfies
\begin{equation}\label{poho half-sphere form}
    \left(\frac{d-3}{2}-\frac{d-1}{p+1}\right)\int_{A}|\nabla' v|^2\phi\,d\sigma-\frac{d-1}{2}\left(\frac{d-\mu\,(p-1)-1}{p+1}\right)\int_{A}\,v^2\phi\,d\sigma\leq0,
\end{equation}
where $\nabla'$ is the tangential gradient to $\Sph$, and $\phi(\theta)=\theta_d$ is an eigenfunction of the Laplace-Beltrami operator $-\Delta'$ in $H_0^1(\mathbb{S}^{d-1}_+)$ associated to the principal eigenvalue $\lambda_1(\mathbb{S}^{d-1}_+)=d-1$.
\end{lemma}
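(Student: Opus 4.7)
The goal is to extend the BVPV Pohozaev inequality \cite[Theorem 2.1]{BVPV}, which is stated for smooth solutions of \eqref{Eq on v}, to the $H^1_0(A)$ weak solution $v$ obtained from $u_\infty$. Set $\tilde w_n(r,\theta) := r^{2/(p-1)} u_{\lambda_n}(r\theta)$; by Schauder regularity up to the smooth part of $\partial\Omega$, each $\tilde w_n$ belongs to $C^{2,\alpha}([1/2,2]\times\overline A)$, and by Lemma \ref{properties of uinf}, $\tilde w_n \to v$ in $H^1((1/2,2)\times A)$. A Fubini/diagonal argument produces a subsequence (still denoted $n$) and a radius $\bar r \in (1/2,2)$ along which the smooth slices $v_n(\theta) := \tilde w_n(\bar r,\theta) \in C^{2,\alpha}(\overline A) \cap H^1_0(A)$ converge to $v$ in $H^1_0(A)$. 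Writing the PDE for $u_{\lambda_n}$ in polar coordinates and evaluating at $r = \bar r$ shows that each $v_n$ satisfies a perturbed sphere equation
\[
-\Delta' v_n + \mu v_n = \vert v_n\vert^{p-1} v_n + f_n \quad\text{in }A, \qquad v_n = 0\text{ on }\partial A,
\]
where $f_n(\theta)$ is a linear combination of $\partial_r \tilde w_n(\bar r,\cdot)$ and $\partial_{rr}\tilde w_n(\bar r,\cdot)$, with $\alpha = 2/(p-1)$.

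The second step is to apply the BVPV computation to each smooth $v_n$. Multiply the perturbed equation by the BVPV test function $M(v_n) := \phi\,\partial_\eta v_n + c\,\phi\,v_n$, where $\eta$ is the geodesic-radial vector field on $\mathbb{S}^{d-1}_+$ emanating from the north pole, $\phi(\theta) = \theta_d$, and $c$ is a suitable constant chosen so that the Pohozaev algebraic identity eliminates all second derivatives of $v_n$ upon integration by parts on $A$. This produces the identity
\[
\mathcal{L}(v_n) + \tfrac{1}{2}\int_{\partial A}(\partial_\nu v_n)^2\,(\eta\cdot\nu)\,\phi\,d\sigma' = R_n,
\]
where $\mathcal{L}(v_n)$ denotes the left-hand side of \eqref{poho half-sphere form} evaluated at $v_n$ and $R_n := \int_A f_n\,M(v_n)\,d\sigma$ is the residual generated by the perturbation. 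The star-shapedness of $A$ with respect to the north pole forces $\eta\cdot\nu \geq 0$ on $\partial A$, so the boundary integral is nonnegative and may be dropped to yield $\mathcal{L}(v_n) \leq R_n$.

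The main obstacle is to pass to the limit. The convergence $\mathcal{L}(v_n) \to \mathcal{L}(v)$ is immediate from strong $H^1_0(A)$ and $L^{p+1}(A)$ convergence of $v_n \to v$ (the latter via Rellich--Kondrachov together with the improved integrability of Remark \ref{optimal energy}). The delicate point is $R_n \to 0$: the residual pairs $f_n$, which encodes second-order radial derivatives of $\tilde w_n$ at a \emph{single} sphere, with $M(v_n)$, a quantity involving $\nabla' v_n$. Such pointwise radial information is not controlled by $H^1_{loc}$ convergence alone. The cleanest way around this is to avoid fixing $\bar r$ altogether and perform the entire Pohozaev computation at the 3D level: test $-\Delta u_{\lambda_n} = \vert u_{\lambda_n}\vert^{p-1} u_{\lambda_n}$ on the annular region $\Omega \cap (B_2 \setminus B_{1/2})$ against the multiplier $\zeta(r)\phi(\theta)(\eta\cdot\nabla u_{\lambda_n}) + c\,\zeta(r)\phi(\theta)\,u_{\lambda_n}$, for some cutoff $\zeta \in C^\infty_c((1/2,2))$. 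After integration by parts on $\Omega \cap (B_2 \setminus B_{1/2})$ and use of the same Pohozaev algebraic identity, every resulting term is quadratic in $(u_{\lambda_n},\nabla u_{\lambda_n})$ weighted by bounded factors on the annulus, and thus passes to the limit by $H^1$ convergence. The homogeneity of $u_\infty = r^{-2/(p-1)}v(\theta)$ collapses the radial contributions, the star-shapedness argument disposes of the $\partial\Omega$-boundary integral exactly as before, and the 3D identity reduces to \eqref{poho half-sphere form} integrated against $\zeta$; dividing by $\int\zeta$ yields the claim.
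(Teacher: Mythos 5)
Your final strategy---running the BVPV Poho\v{z}aev computation over the annulus against a radial cutoff, integrating by parts in $r$ so that only first-order quantities of $u_{\lambda_n}$ (plus $\vert u_{\lambda_n}\vert^{p+1}$) appear, using star-shapedness to discard the signed $\partial\Omega$-boundary term, and passing to the limit via the strong $H^1\cap L^{p+1}$ convergence together with the $r$-independence of $v$---is essentially the paper's proof, and you correctly diagnosed why the fixed-slice variant at a single radius $\bar r$ cannot work. One small caution: the deformation field in the BVPV identity must be $\nabla'\phi$ (so the multiplier is $\left<\nabla'\phi,\nabla' v\right>+c\,\phi\,v$, exploiting that the Hessian of $\phi=\theta_d$ equals $-\phi$ times the metric), not $\phi$ times the unit geodesic-radial field $\eta$ as written; with the latter the algebra does not close to produce the weights $\phi$ appearing in \eqref{poho half-sphere form}.
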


\begin{proof}
We begin with a simple geometric lemma. Denote by $\left<,\right>$ the inner product on the tangent space to $\mathbb{S}^{d-1}$, $\phi(\theta)=\theta_d$, and $\nabla'\phi$ its Riemannian gradient on $\mathbb{S}^{d-1}$. Then,
\begin{lemma}
Let $A\subset\mathbb S^{d-1}_+$ denote a $C^1$ open set with outward unit normal direction $\nu$. If $A$ is star-shaped with respect to the north pole, there holds $\left<\nabla'\phi,\nu\right>\le0$ on $\partial A$. 
\end{lemma}
\begin{proof}
Apply the stereographic projection $\pi_S$ from the south pole to the set $A$. Then take a point $x\in\pi_S(A)\subset B_1\subset\R^{d-1}$ and consider the unique minimal geodesic $\gamma_\theta$ contained in $A$ connecting the north pole to $\pi_S^{-1}(x)=:\theta$. Its projection $\pi_S(\gamma_\theta)$ is a straight line connecting the origin to $x$ and is contained in $\pi_S(A)$. So $\pi_S(A)\subset\R^{d-1}$ is star-shaped (in the usual Euclidean sense). Since the stereographic projection is conformal (it preserves angles), $\left<\nabla'\phi,\nu\right>$ has the same sign as $V\cdot n$, where $V=d\pi_S(\nabla'\phi)=\nabla(\frac{1-r^2}{1+r^2})=\frac{-4r}{(1+r^2)^2}\frac xr$, $x\in\R^{d-1}$, $r=\vert x\vert$, and $n=d\pi_S(\nu)$ is the outward unit normal direction of $\pi_S(A)$. In view of the Lemma in \cite{evans} p. 554, we conclude that $\left<\nabla'\phi,\nu\right>\le0$ on $\partial A$. 
\end{proof}
Let us return to the proof of Lemma \ref{pohozaev statement}. To simplify the appearance we set $C_1=\left(\frac{d-3}{2}-\frac{d-1}{p+1}\right)$ and $C_2=\frac{d-1}{2}\left(\frac{d-\mu\,(p-1)-1}{p+1}\right)$. Let $u\in\,C^2(\Omega)\cap\,C(\super\Omega)$ be a solution of \eqref{main}. Since for any $\lambda>0$, $u_\lambda(x)=\lambda^{\frac{2}{p-1}}u(\lambda\,x)$ is also a smooth solution of \eqref{main}, we set $v_\lambda$ to satisfy
\begin{equation*}
    u_\lambda(x)=r^{-\frac{2}{p-1}}v_\lambda(r,\theta).
\end{equation*}
Plugging this expression of $u_\lambda$ into \eqref{main}, it follows that  
$v_\lambda$ is a classical solution of the following equation:
\begin{equation}\label{eq on vlambda}  
    \begin{cases}
 		-\Delta' v_\lambda + \mu\,v_\lambda \,=\, |v_\lambda|^{p-1}v_\lambda + e_\lambda& \textrm{in}\;\; \Omega\,, \\
		v_\lambda \,=\, 0 & \textrm{on}\;\; \partial\Omega\,,
	\end{cases}
\end{equation}
where the error term $e_\lambda$ is
\begin{equation*}
    e_\lambda=\left(d-1-\frac{4}{p-1}\right)r\partial_rv_\lambda + r^2\partial_r^2v_\lambda.
\end{equation*}
Again for simplicity, we define $C_3=\left(d-1-\frac{4}{p-1}\right)$.

Following \cite[pp. 186-188]{BVPV}, we apply the divergence theorem to the vector field
\begin{equation*}
    P=\left<\nabla'\phi,\nabla' v_\lambda\right>\nabla' v_\lambda,
\end{equation*}
to eventually reach the following relation
\begin{equation}\label{div}
    \begin{aligned}
        C_1\int_A|\nabla'v_\lambda|^2\phi\,d\sigma+C_2\int_Av_\lambda^2\phi\,d\sigma-\frac{d-1}{p+1}\int_Ae_\lambda v_\lambda\phi\,d\sigma+\int_A\left<\nabla'\phi,\nabla'v_\lambda\right>e_\lambda d\sigma\\
        =\frac{1}{2}\int_{\partial A}|\nabla'v_\lambda|^2\left<\nabla'\phi,\nu\right>d\tau\leq0,
    \end{aligned}
\end{equation}
where $\nu$ is the unit outer normal. The right-hand side of \eqref{div} is nonpositive since $A$ is star-shaped. Next, we multiply all terms of \eqref{div} by a standard cutoff function $\eta\colon\R_+\to\R_+$ compactly supported away from $0$ and we integrate in the radial direction (including the factor $r^{d-1}$). We begin with the first error term:
\begin{align*}
    \int_0^\infty\int_A&e_\lambda v_\lambda\phi r^{d-1}\eta\,d\sigma\,dr=C_3\int_0^\infty\int_A(\partial_rv_\lambda) v_\lambda\phi r^{d}\eta\,d\sigma\,dr+\int_0^\infty\int_A(\partial_r^2v_\lambda) v_\lambda\phi r^{d+1}\eta\,d\sigma\,dr\\
    &=C_3\int_0^\infty\int_A(\partial_rv_\lambda) v_\lambda\phi r^{d}\eta\,d\sigma\,dr-\int_0^\infty\int_A(\partial_rv_\lambda)^2 \phi r^{d+1}\eta\,d\sigma\,dr-\int_0^\infty\int_A(\partial_rv_\lambda)v_\lambda \phi \partial_r(r^{d+1}\eta)\,d\sigma\,dr,
\end{align*}
where we have integrated by parts in the second integral of the first line. Now, by the fact that $v_\lambda\to{v}=r^{\frac{2}{p-1}} u_\infty$ in $H^1(\Omega_R)$ (along a sequence) and that ${v}$ does not depend on $r$, we can pass to the limit as $\lambda\to\infty$ to conclude
\[\lim_{\lambda\to\infty}\int_0^\infty\int_Ae_\lambda v_\lambda\phi r^{d-1}\eta\,d\sigma\,dr=0.\]
For the second error term in \eqref{div}, we proceed as follows:
\begin{align*}
    \int_0^\infty\int_A\left<\nabla'v_\lambda,\nabla'\phi\right>e_\lambda r^{d-1}\eta\,d\sigma dr&=\int_0^\infty\int_A\left<\nabla'v_\lambda,\nabla'\phi\right>\left(C_3r\partial_rv_\lambda+r^2\partial_r^2v_\lambda\right) r^{d-1}\eta\,d\sigma dr\\
    &=\underbrace{C_3\int_0^\infty\int_A\left<\nabla'v_\lambda,\nabla'\phi\right>\partial_rv_\lambda\,r^{d}\eta d\sigma dr}_{I_1}\\&\underbrace{-\int_0^\infty\int_A\left<\nabla'v_\lambda,\nabla'\phi\right>\partial_rv_\lambda\, \partial_r(r^{d+1}\eta) d\sigma dr}_{I_2}\\
    &\underbrace{-\int_0^\infty\int_A\left<\nabla'\partial_rv_\lambda,\nabla'\phi\right>\partial_rv_\lambda r^{d-1}\eta d\sigma dr}_{I_3},
\end{align*}
where we have integrated by parts in the $r$ variable. Since $v_\lambda\to{v}$ in $H^1(\Omega_R)$ (along a sequence) and since ${v}$ does not depend on $r$, $I_1$ and $I_2$ tend to zero as $\lambda\to\infty$. We deal with $I_3$ by recalling that $\phi$ is an eigenfunction and so
\begin{align*}
-\int_0^\infty\int_A\left<\nabla'\partial_rv_\lambda,\nabla'\phi\right>\partial_rv_\lambda r^{d-1}\eta d\sigma dr&=-\frac{1}{2}\int_0^\infty\int_A\left<\nabla'(\partial_rv_\lambda)^2,\nabla'\phi\right> r^{d-1}\eta d\sigma dr\\
&=
-\frac{d-1}{2}\int_\Omega(\partial_rv_\lambda)^2\phi \eta\,dx,
\end{align*}
where in the latter we have used an integration by parts over $A$ and the fact that $\partial_rv_\lambda =0$ on $\partial \Omega \setminus \{0 \}$. 

This term then tends to $0$ as $\lambda\to\infty$, since $v_\lambda$ converges to ${v}$ in $H^1(\Omega_R)$ (along a sequence).

Now we deal with the other solid integral terms arising from integrating \eqref{div} with the test function $r^{d-1}\eta$ in the $r$ variable. Since $u_\lambda$ converges to $u$ in $H^1(\Omega_R)$ for every $R>0$ along a sequence, so does $v_\lambda$ to ${v}$. Since ${v}$ does not depend on $r$, we can apply Fubini's theorem at the limit to obtain
\begin{equation*}
    \begin{aligned}
        \lim_{\lambda\to\infty}\int_0^\infty\int_A \left(C_1|\nabla'v_\lambda|^2+C_2v_\lambda^2\right)\phi\,r^{d-1}\eta d\sigma dr
        =\int_0^\infty r^{d-1}\eta dr\int_A\left(C_1|\nabla'v|^2+C_2v^2\right)\phi d\sigma\leq0,
    \end{aligned}
\end{equation*}
where $C_1$ and $C_2$ are the fixed constants defined at the beginning of the proof. Dividing by the positive constant $\int_0^\infty r^{d-1}\eta dr$ yields \eqref{poho half-sphere form}.
\end{proof}

\subsubsection{Proof of Theorem \ref{To prove:SOAC2}}\label{eop}
\begin{proof}
By an elementary computation, $p_S(d)<p_S(d-1)<p_{JL}(d)$. Recall also that if $p\le p_{JL}(d)$, then \eqref{numerical condition} holds. So all $p>p_S(d)$ either satisfy \eqref{numerical condition}, $p>p_S(d-1)$, or both. Suppose first that \eqref{numerical condition} holds. By Lemmas \ref{properties of uinf}, \ref{u to v}, and \ref{Liouville for v}, it follows that $u_\infty=0$. In the remaining case $p>p_S(d-1)$, Lemma \ref{pohozaev statement} implies that $v=0$ (which of course implies that $u_\infty=0$). Then, since $(u_{\lambda_n})$ converges strongly to $u_\infty$ in $H^1\cap\,L^{p+1}(\Omega_R)$, it follows that $E(u_{\lambda_n};1)\to0$ as $\lambda_n\to\infty$. Furthermore, by the monotonicity formula \eqref{derivative of E}, the convergence to $0$ is monotone non-decreasing. 

To conclude, we study the behavior of $E(u_{\lambda};1)=E(u;\lambda)$ as $\lambda\to0^+$. By definition \eqref{E} of the monotonicity formula, we have
$$
E(u,\lambda) \ge -\frac{1}{p+1} 
\lambda^{2\frac{p+1}{p-1}-d}\int_{\Omega_\lambda} \vert  u\vert^{p+1}\;dx = -c\lambda^{2\frac{p+1}{p-1}}\fint_{\Omega_\lambda} \vert  u\vert^{p+1}\;dx
$$
Since $u$ is bounded in a neigbourhood of $0$, we deduce that $\lim_{\lambda\to 0^+} E(u;\lambda)\ge 0$. Since $E(u,\cdot)$ is nondecreasing, we deduce that
\begin{equation}\label{eispos}
    E(u;\lambda)\ge 0\quad\text{for all $\lambda>0$}
\end{equation}
Now, $\lim_{\lambda\to+\infty} E(u;\lambda)=E(u_\infty;1)=0$ and so, since $E(u,\cdot)$ is nondecreasing and nonnegative, $E(u;\lambda)=0$ for every $\lambda>0$. By \eqref{derivative of E}, $u$ is homogeneous, which is possible only if $u=0$ since $u\in C(\super\Omega)$. 
\end{proof}

%
%
%

\section{Proofs of Corollary \ref{cor:SOAC} and Corollary \ref{lastcor} in the supercritical case}

\subsection{Proof of Corollary \ref{cor:SOAC}} We just need to inspect the proof of Theorem \ref{To prove:SOAC} and adapt it as follows. Let $u$ be a nonnegative weak stable solution.
By Remark \ref{form-monotonia-coni-troncati}, the monotonicity formula remains valid and so\footnote{In the second part of the proof of Lemma \ref{properties of uinf}, in order to prove that $v\in H^1_0(A)$, one just needs to replace the sequence $(u_{\lambda_n})$ by the approximating sequence $(u_n^{R})$, $R=2$, from Remark \ref{form-monotonia-coni-troncati}.} does Lemma \ref{properties of uinf}. So, the blow-down limit $u_\infty$ is homogeneous.
Regarding Pohozaev's identity Lemma \ref{pohozaev statement}, we use Proposition 13 in \cite{df} to deduce that $u_\infty$ is also the limit (in $H^1(B_R^+)$) of classical solutions $u_n^R$ (with zero value on $\partial\R^d_+\cap B_R$) and so the same proof applies. Finally, we adapt Section \ref{eop} by observing that by the same proof $E(u_n^R;\lambda)\ge 0$ for all $\lambda\in (0,R)$ and so $E(u;\lambda)$ is nonnegative for all $\lambda>0$. As before, this forces $u$ to be homogeneous, hence $u=0$ by our classification of homogeneous weak solutions. 

\hfill\qed

\subsection{Proof of Corollary \ref{lastcor}}
In the supercritical case $p>p_S(d)$, the proof is identical to that of Theorem \ref{To prove:SOAC2}. We only need to observe that in the range \eqref{numerical condition} of exponents $p$, no geometrical condition on the cone $\Omega$ is needed.
\hfill\qed

\subsection{Positive solutions in convex cones}
In this last section, we explain more carefully how Busca's result \cite{busca} for positive classical solutions defined on convex cones can be recovered thanks to Theorem \ref{To prove:SOAC2}. This is the content of Lemma \ref{geometric2} below. 
First, we extend the following natural result proven in \cite[Proposition 2]{Ferreira} in the case where $\Omega$ is convex. 
\begin{lemma}\label{geometric0}
Let $\Omega$ be defined as in \eqref{cone} for some $A\subset\mathbb{S}^{d-1}_+$. Then $\Omega$ is convex (resp. star-shaped) if and only if $A$ is geodesically convex (resp. geodesically star-shaped).
\end{lemma}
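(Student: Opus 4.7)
The proof rests on one polar-coordinate identity. For points $x_i = r_i\theta_i$ with $r_i > 0$ and $\theta_i \in A \subset \mathbb{S}^{d-1}_+$ ($i = 1, 2$), write
\[
(1-t)x_1 + tx_2 \;=\; s(t)\,\eta(t), \qquad t \in [0,1],
\]
with $s(t) := |(1-t)r_1\theta_1 + tr_2\theta_2|$ and $\eta(t) := s(t)^{-1}[(1-t)r_1\theta_1 + tr_2\theta_2]$. Because $\theta_1$ and $\theta_2$ lie in the same open hemisphere they are not antipodal, so $s(t) > 0$ throughout $[0,1]$; and $\eta(t)$, being a normalized non-negative combination of $\theta_1$ and $\theta_2$, is exactly a point on the (unique) minimal spherical geodesic joining $\theta_1$ to $\theta_2$. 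As $t$ ranges over $[0,1]$, the ratio $(1-t)r_1 : tr_2$ covers $[0,\infty]$, so $\eta$ sweeps the entire arc. This sets up a radial bijection between the Euclidean segment $[x_1,x_2]$ and the spherical geodesic from $\theta_1$ to $\theta_2$.

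For the convex case, both implications are immediate from this identity. If $A$ is geodesically convex then $\eta(t) \in A$ for every $t$, hence $(1-t)x_1+tx_2 = s(t)\eta(t) \in \Omega$. Conversely, applying the identity with $r_1 = r_2 = 1$ (so $x_i = \theta_i \in A \subset \Omega$) and using convexity of $\Omega$ shows that the arc $\eta([0,1])$, which is the geodesic from $\theta_1$ to $\theta_2$, lies in $A$.

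The star-shaped case is similar. Assuming first that $A$ is geodesically star-shaped with respect to $\theta_0 \in A$, the identity applied to $x_1 = \theta_0$ and $x_2 = x = r\theta \in \Omega$ places $\eta(t)$ on the geodesic from $\theta_0$ to $\theta$, hence in $A$, so $[\theta_0,x] \subset \Omega$; this yields star-shapedness of $\Omega$ with respect to $\theta_0$. Conversely, if $\Omega$ is star-shaped with respect to some $x_0 = r_0\theta_0 \in \Omega$, then $\Omega$ is also star-shaped with respect to $\theta_0$ itself: for any $x \in \Omega$, scale up to obtain $r_0 x \in \Omega$ (cone property), invoke star-shapedness at $x_0 = r_0\theta_0$ to get $[r_0\theta_0, r_0x] \subset \Omega$, and scale back by $1/r_0$ to conclude $[\theta_0, x] \subset \Omega$. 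In particular, for each $\theta \in A \subset \Omega$ the segment $[\theta_0,\theta]$ lies in $\Omega$, and its radial projection --- by the identity with $r_1 = r_2 = 1$ --- is precisely the minimal geodesic from $\theta_0$ to $\theta$, which is therefore contained in $A$. The only mildly delicate step of the entire argument is this cone-scaling reduction, which ensures the star center can be chosen inside $A$; every other implication reduces to the elementary polar identity above.
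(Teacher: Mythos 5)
Your proof is correct and follows essentially the same route as the paper's: both rest on the observation that the radial normalization of the Euclidean chord $(1-t)x_1+tx_2$ traces exactly the minimal geodesic between $\theta_1$ and $\theta_2$ (which is unique and well defined since points of $A\subset\mathbb{S}^{d-1}_+$ are never antipodal), so that convexity and star-shapedness transfer back and forth via the cone property. Your explicit cone-scaling reduction of the star center $x_0=r_0\theta_0$ to the point $\theta_0\in A$ is a detail the paper's proof leaves implicit, but the argument is otherwise the same.
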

\begin{proof}
Let $\Omega$ be convex. Now take two points $\varphi_1,\varphi_2\in A$. (By the hypothesis that $A\subset\mathbb{S}^{d-1}_+$, $\varphi_1$ and $\varphi_2$ may not be antipodal points.) By convexity, $t\varphi_2+(1-t)\varphi_1\in\Omega$, so \[S_t=\frac{t\varphi_2+(1-t)\varphi_1}{|t\varphi_2+(1-t)\varphi_1|}\in A\]for all $t\in[0,1]$ by the cone property of $\Omega$. It can be directly verified, for example using a stereographic projection from $-\varphi_2$, that $\{S_t\}_{0\leq t\leq1}$ coincides with the unique geodesic curve from $\varphi_1$ to $\varphi_2$. Finally, since $\{S_t\}_{t>0}\subset A$, we have that $A$ is geodesically convex.

Conversely, let us choose two general points $x_1,x_2\in\Omega$. In the same way as above, we can show that 
\begin{equation*}
\left\{\frac{tx_2+(1-t)x_1}{|tx_2+(1-t)x_1|}\right\}_{0\leq t\leq 1}\subset A
\end{equation*}
is a geodesic curve on $A$, whereby we find that $tx_2+(1-t)x_1\in\Omega$ due to the cone property of $\Omega$.

In the case of star-shaped domains, we repeat the above method with $\varphi_2$ and $x_2$ fixed taken to be the points with respect to which $A$ and $\Omega$, respectively, are star-shaped.

\end{proof}

\begin{lemma}\label{geometric1}
If $A\subset\mathbb{S}^{d-1}_+$ is star-shaped with respect to some direction $\varphi_0\in A$, then $\Omega$ is convex in that direction in the sense that for all $x\in\Omega$ and $t>0$, $x+t\varphi_0\in\Omega$. In particular, the boundary of the cone $\Omega$ is a graph 
with respect to the direction $\varphi_0\in A$.
\end{lemma}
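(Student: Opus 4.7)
The plan is to reduce the main claim to a one-line geometric observation and then to deduce the graph property by passage to the closure. Given $x\in\Omega$ and $t>0$, I would write $x=r\theta$ with $r>0$ and $\theta\in A$, and set $y:=x+t\varphi_0=r\theta+t\varphi_0$. Since $\theta$ and $\varphi_0$ both have a strictly positive $d$-th coordinate, so does $y$; in particular $|y|>0$ and we may write $y=r'\theta'$ with $r':=|y|$ and $\theta':=y/r'\in\mathbb{S}^{d-1}$. It thus suffices to show that $\theta'\in A$.

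The key observation is that $\theta'$ lies on the minimal geodesic joining $\theta$ to $\varphi_0$. Indeed, $\theta'$ is a positive scalar multiple of $r\theta+t\varphi_0$ and therefore lies in the $2$-plane $\Pi:=\mathrm{span}(\theta,\varphi_0)$; being of unit length, it belongs to the great circle $\Pi\cap\mathbb{S}^{d-1}$. Since $\theta,\varphi_0\in\mathbb{S}^{d-1}_+$ are not antipodal and the coefficients $r,t$ are both strictly positive, the standard parametrization of the short arc between $\theta$ and $\varphi_0$ by normalized positive combinations places $\theta'$ on the minimal geodesic from $\theta$ to $\varphi_0$. By the star-shapedness of $A$ with respect to $\varphi_0$, this geodesic is contained in $A$, so $\theta'\in A$ and hence $y\in\Omega$, proving the main claim.

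For the ``in particular'' assertion, passage to the limit in the main claim gives $\overline{\Omega}+t\varphi_0\subset\overline{\Omega}$ for every $t>0$. Consequently, every line $L_y=\{y+s\varphi_0:s\in\mathbb{R}\}$ meets $\Omega$ either in the empty set or in an open upper half-line, and therefore meets $\partial\Omega$ in at most one point; this is the graph representation of $\partial\Omega$ in the $\varphi_0$-direction. Equivalently (and this is the form actually used in Proposition \ref{prop3}), at every smooth point $p\in\partial\Omega$ the outward unit normal $\nu$ satisfies $\nu\cdot\varphi_0\le 0$, since moving from $p$ in the direction $\varphi_0$ cannot leave $\overline{\Omega}$.

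The one mildly subtle point in the argument is the geometric claim that normalized positive combinations of two non-antipodal unit vectors $\theta,\varphi_0$ parametrize exactly the short arc of the great circle through them; this is immediate from the classical identification of spherical geodesics with intersections of $\mathbb{S}^{d-1}$ and two-dimensional linear subspaces through the origin, combined with the observation that the long arc would require passing through the antipodes $-\theta$ or $-\varphi_0$, which cannot be written as a normalized positive combination of $\theta$ and $\varphi_0$.
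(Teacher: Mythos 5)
Your argument is correct and is essentially the paper's own proof: both reduce the claim to showing that the normalized positive combination of $\theta$ and $\varphi_0$ lies on the minimal geodesic joining them (you justify this via the great-circle characterization, the paper via stereographic projection from $-\varphi_0$), and then invoke star-shapedness of $A$ together with the cone property of $\Omega$. Your extra discussion of the ``in particular'' graph statement goes slightly beyond the paper, which leaves that part unjustified, and is welcome.
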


\begin{proof}
First, by virtue of the radial scaling property of cones, it is enough to prove the result for all $x=\varphi_1\in A$. It can then be verified that the curve \[S=\{S_t\}_{t>0}=\left\{\frac{\varphi_1+t\varphi_0}{|\varphi_1+t\varphi_0|}\right\}_{t>0}\subset\mathbb{S}^{d-1}_+,\]
coincides with the unique minimal geodesic connecting $\varphi_1$ to $\varphi_0$ (for example by taking the stereographic projection from $-\varphi_0$). Then, since $A$ is star-shaped with respect to $\varphi_0$, $S_t\in A$ for all $t$, whereby $x+t\varphi_0\in\Omega$ due to the cone property of $\Omega$.
\end{proof}

\begin{lemma}\label{geometric2}
Assume that $A\subset\mathbb{S}^{d-1}_+$ is geodesically convex. Then, a suitable rotation of $A$ is star-shaped with respect to the north pole and still contained in $\mathbb{S}^{d-1}_+$.
\end{lemma}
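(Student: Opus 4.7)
The plan is to exhibit $\theta_0 \in A$ such that $A$ lies in the open hemisphere centered at $\theta_0$: $\theta\cdot\theta_0 > 0$ for every $\theta \in A$. Granted such $\theta_0$, any rotation $R \in SO(d)$ with $R\theta_0 = e_d$ does the job: the hemisphere condition translates into $R(A) \subset \mathbb{S}^{d-1}_+$, and since $R(A)$ is geodesically convex (rotations preserve geodesics and convexity) and contains $e_d = R(\theta_0)$, it is automatically star-shaped with respect to the north pole.

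To produce $\theta_0$, I would work in the convex cone $\Omega = \{r\theta : r > 0,\ \theta \in A\}$ given by Lemma \ref{geometric0}: this is an open convex subset of $\{x_d > 0\}$. Finding the required $\theta_0$ amounts to finding a vector $v \in \Omega$ with $v\cdot x > 0$ for every $x \in \Omega$, after which $\theta_0 := v/|v|$ works. Let $L := \overline\Omega \cap (-\overline\Omega)$ be the lineality subspace of the closed convex cone $\overline\Omega$. Since $\overline\Omega \subset \{x_d \ge 0\}$, any line in $\overline\Omega$ through the origin must lie in $\{x_d = 0\}$; hence $L \subset \{x_d = 0\}$, and in particular $L \cap \Omega = \emptyset$. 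Projecting orthogonally onto $L^\perp$ yields a pointed, full-dimensional closed convex cone $C \subset L^\perp$ with $\overline\Omega = L \oplus C$, and consequently $\Omega = \mathrm{int}(\overline\Omega) = L + \mathrm{int}_{L^\perp}(C)$.

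I then claim $\mathrm{int}_{L^\perp}(C) \cap \mathrm{int}_{L^\perp}(C^*) \neq \emptyset$, where $C^*$ denotes the dual cone of $C$ inside $L^\perp$. Were this intersection empty, Hahn--Banach would yield a hyperplane through the origin separating these two nonempty open convex cones of $L^\perp$, with normal $a \ne 0$ satisfying $a \in C$ (by the bipolar theorem) and $-a \in C^*$, hence $-|a|^2 = a \cdot (-a) \ge 0$, a contradiction. Picking any $v$ in this intersection, first $v \in \mathrm{int}_{L^\perp}(C) \subset \Omega$, so $\theta_0 := v/|v| \in A$; second, for $x \in \Omega$ write $x = \ell + c$ with $\ell \in L$ and $c \in C$. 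The orthogonality $v \perp L$ gives $v\cdot x = v\cdot c$, while $c \ne 0$ since $x_d > 0 = \ell_d$ would otherwise force $x = \ell \in L$, contradicting $x \in \Omega$; therefore $v\cdot c > 0$ by $v \in \mathrm{int}_{L^\perp}(C^*)$, which yields the hemisphere property. The main subtle point is the possible nontriviality of $L$, corresponding geometrically to $\overline A$ touching the equator in antipodal pairs: in that situation $\mathrm{int}(\overline\Omega^*)$ is empty inside $\mathbb{R}^d$, and the naive candidate $\theta_0 = e_d$ need not lie in $A$, so passing to the quotient $L^\perp$ is essential.
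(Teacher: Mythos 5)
Your proof is correct, but it takes a genuinely different route from the paper's. The paper argues explicitly: assuming the north pole is not in $A$, it separates the convex cone $\Omega$ from the $x_d$-axis by a hyperplane containing that axis, arranges (``up to a suitable modification of the choice of $\nu$'') that $A$ both lies in $\{x_1>0\}$ and meets the great circle $G$ in the $x_1x_d$-plane, and then applies the concrete planar rotation of that $2$-plane sending a point $a\in G\cap A$ to the north pole, checking by an elementary angle computation that the image stays in $\mathbb{S}^{d-1}_+$ (star-shapedness being automatic since the rotated set is convex and contains the pole). You instead prove the stronger, coordinate-free statement that $A$ lies in the open hemisphere centered at one of its own points $\theta_0$, after which \emph{any} rotation taking $\theta_0$ to the north pole works. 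Your key ingredients — the decomposition $\overline\Omega=L\oplus C$ along the lineality space $L\subset\{x_d=0\}$, and the fact that a pointed, full-dimensional closed convex cone meets the interior of its dual (your separation-plus-bipolar argument, and the strict positivity $v\cdot c>0$ for $c\in C\setminus\{0\}$ when $v\in\mathrm{int}_{L^\perp}(C^*)$) — are all standard and correctly deployed. What each approach buys: the paper's is elementary and produces the rotation in closed form; yours is cleaner conceptually, isolates the real difficulty (a nontrivial lineality space, i.e. $\overline A$ approaching the equator in antipodal pairs), and in effect supplies the duality fact that justifies the paper's unproved assertion that $\nu$ can be modified so that $G$ meets $A$ while keeping $A\subset\{x_1>0\}$. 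Both arguments implicitly use that $A$ is (relatively) open, so that $\Omega$ has nonempty interior — you through $\Omega=\mathrm{int}(\overline\Omega)$ and the full-dimensionality of $C$, the paper through the separation step — which is the setting in which the lemma is actually applied.
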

\begin{proof}
If the north pole $\Vec{n}$ belongs to $A$ the statement is trivially proven because convex sets are in particular star-shaped. Therefore, let us assume that $\Vec{n}$ does not belong to $A$. Then $\Omega\cap\left<\Vec{n}\right>=\emptyset$ by the cone property of $\Omega$, and $\Omega$ is convex by Lemma \ref{geometric0}. So by the geometric form of the Hahn-Banach theorem, there exists a hyperplane $H$ normal to some $\nu\in\mathbb{S}^{d-1}$ (w.l.o.g. take $\nu=(-1,0,\dots,0)$) which includes the $x_d$ axis and which satisfies $H\cap\Omega=\emptyset$. By the convexity assumption, $\Omega$ (resp. $A$) must lie on one side of $H$ (resp. $H\cap\mathbb{S}^{d-1}_+$). That is, w.l.o.g, $A\subset\mathbb{S}^{d-1}_+\cap\{x_1>0\}$.

Let us consider the great circle $G=\{(x_1,0,\dots,0,x_d)\colon x_i\in[-1,1],|x|=1\}$.
Up to a suitable modification of the choice of $\nu$, we can assume (by convexity of $A$) that there is a point $a=(\cos\theta_a,0,\dots,0,\sin\theta_a)\in G\cap A$ for its corresponding $\theta_a\in(0,\pi/2)$. Then we take the rotation that sends $a$ to $\Vec{n}$. This rotation has a specific form: for a general $p=(\alpha_p\cos\theta,x_2,\cdots,x_{d-1},\alpha_p\sin\theta)\in A$ for its corresponding $\alpha_p\in[0,1]$, $\theta\in(0,\pi/2)$, we have
\begin{equation*}
    p\mapsto\left(\alpha_p\cos\left(\frac{\pi}{2}-\theta_a+\theta\right),x_2,\cdots,x_{d-1},\alpha_p\sin\left(\frac{\pi}{2}-\theta_a+\theta\right)\right).
\end{equation*}
Since $\theta,\theta_a\in(0,\pi/2)$, it can be verified that the image of $A$ under this rotation is still contained in $\mathbb{S}^{d-1}_+$.

\end{proof}

\appendix
\section*{Acknowledgement}
This work was performed within the framework of the LABEX MILYON (ANR-10-
LABX-0070) of Université de Lyon, within the program "Investissements
d'Avenir" (ANR-11-IDEX-0007) operated by the French National Research Agency
(ANR).

\bibliographystyle{plain} 

\end{document}